\title[A metric implicit function theorem]{An implicit function theorem for Lipschitz mappings into metric spaces}
\author{Piotr Haj{\l}asz and Scott Zimmerman}
\address{P.\ Haj{\l}asz: Department of Mathematics, University of Pittsburgh, 301
  Thackeray Hall, Pittsburgh, PA 15260, USA, {\tt hajlasz@pitt.edu}}
\address{S. Zimmerman: Department of Mathematics, University of Connecticut, 341 Mansfield Road U1009, 
  Storrs, CT 06269, USA, {\tt scott.zimmerman@uconn.edu}}
\thanks{P.H.\ was supported by NSF grant DMS-1800457.}
\def\rank{{\rm rank\,}}
\newcommand{\overbar}[1]{\mkern 1.7mu\overline{\mkern-1.7mu#1\mkern-1.5mu}\mkern 1.5mu}
\def\eps{\varepsilon}
\def\H{{\mathcal H}}
\def\K{{\mathcal K}}
\newtheorem{theorem}{Theorem}
\newtheorem{lemma}[theorem]{Lemma}
\newtheorem{corollary}[theorem]{Corollary}
\newtheorem{proposition}[theorem]{Proposition}
\def\diam{{\rm diam\,}}
\def\dist{{\rm dist\,}}
\def\lip{{\rm Lip\,}}
\theoremstyle{definition}
\newtheorem{remark}[theorem]{Remark}
\newcommand{\barint}{
\rule[.036in]{.12in}{.009in}\kern-.16in \displaystyle\int }
\newcommand{\barcal}{\mbox{$ \rule[.036in]{.11in}{.007in}\kern-.128in\int $}}
\newcommand{\bbbn}{\mathbb N}
\newcommand{\bbbr}{\mathbb R}
\def\ap{\operatorname{ap}}
\def\diam{\operatorname{diam}}
\def\dist{\operatorname{dist}}
\def\mvint_#1{\mathchoice
          {\mathop{\vrule width 6pt height 3 pt depth -2.5pt
                  \kern -8pt \intop}\nolimits_{\kern -3pt #1}}%
          {\mathop{\vrule width 5pt height 3 pt depth -2.6pt
                  \kern -6pt \intop}\nolimits_{#1}}%
          {\mathop{\vrule width 5pt height 3 pt depth -2.6pt
                  \kern -6pt \intop}\nolimits_{#1}}%
          {\mathop{\vrule width 5pt height 3 pt depth -2.6pt
                  \kern -6pt \intop}\nolimits_{#1}}}
\numberwithin{theorem}{section} \numberwithin{equation}{section}
\begin{document}

\subjclass[2010]{53C23, 28A75, 54E40}
\keywords{metric spaces; Lipschitz mappings; implicit function theorem}
\sloppy


\begin{abstract}
We prove a version of the implicit function theorem for Lipschitz mappings 
$f:\mathbb{R}^{n+m}\supset A \to X$ into arbitrary metric spaces.
As long as the 
pull-back of the Hausdorff content $\mathcal{H}_{\infty}^n$ by $f$ has positive upper $n$-density
on a set of positive Lebesgue measure,
then, there is a local diffeomorphism $G$ in $\bbbr^{n+m}$ and a Lipschitz
map $\pi:X\to \bbbr^n$ such that
$\pi\circ f\circ G^{-1}$, when restricted to a certain 
subset of $A$ of positive measure, is the orthogonal projection of $\bbbr^{n+m}$ onto the first $n$-coordinates.
This may be seen as a qualitative version of a simlar result of Azzam and Schul \cite{AzzSch}.
The main tool in our proof is the metric change of variables introduced in \cite{HajMal}.
\end{abstract}

\maketitle

\centerline{\emph{In memoriam: William P. Ziemer (1934-2017)}}

\section{Introduction}

The classical implicit function theorem (IFT)
ensures that the  map is
structurally very nice near points where the derivative of the map
has a certain rank.
In this paper, we present a version of the IFT for Lipschitz mappings 
$f:\mathbb{R}^{n+m} \supset A \to X$ into arbitrary metric spaces.
It turns out that in the case of mappings into metric spaces, the upper density defined below will play a role of the Jacobian of $f$.
For a measurable set $A \subset \bbbr^{k}$, and $x\in A$, we
define the \emph{lower} and \emph{upper} $n$-\emph{densities} of a mapping 
$f:A \to X$ as
$$
\Theta^{*n}(f,x) 
:= 
\limsup_{r \to 0}\frac{\H_{\infty}^n(f(B(x,r) \cap A))}{\omega_n r^n},
\quad
\Theta_*^n(f,x)
:= 
\liminf_{r \to 0}\frac{\H_{\infty}^n(f(B(x,r) \cap A))}{\omega_n r^n}.
$$
These are simply the upper and the lower $n$-densities of the pull-back of $\mathcal{H}_{\infty}^n$ by $f$ on $A$.
Here $\omega_n$ is the volume of the unit ball in $\bbbr^n$ and 
the $\H_\infty^n$ is the {\em Hausdorff content} defined for subsets of $X$ by
$$
\H^n_\infty(E)=\inf \frac{\omega_n}{2^n}\sum_{i=1}^\infty (\diam A_i)^n,
$$
where the infimum is taken over all coverings of $E$,
i.e. $E\subset\bigcup_{i=1}^\infty A_i$.
Note that the Hausdorff content of {\em any} bounded set is finite, and, for an $L$-Lipschitz map $f:A\to X$, $\Theta^{*n}(f,x)\leq L^n$ for {\em all} $x\in A$.

The reader may want to compare these definitions with the definition (and properties) of the upper and lower densities of measures in \cite{ambrosiok,mattila,simon}.

The following observation will be useful throughout the paper:
\begin{equation}
\label{1410}
\Theta^{*n}(f,x)=0
\quad
\text{if and only if}
\quad
\lim_{d\to 0}\frac{\H_{\infty}^n(f(Q(x,d) \cap A))}{\omega_n d^n} = 0
\end{equation}
where $Q(x,d)$ is the cube centered at $x$ with side length $d$.
(Here and in what follows, a cube has edges parallel to the coordinate axes.)
The main result of the paper is as follows:
\begin{theorem}[Metric IFT]
\label{HardSard2}
Fix a metric space $X$, 
a set $A \subset \bbbr^{n+m}$ with positive Lebesgue measure, 
and a Lipschitz mapping $f:A \to X$. 
Suppose $\Theta^{*n}(f,x) > 0$ on a subset of $A$ with positive Lebesgue measure.
Then 
\begin{itemize}
\item[(A)] $\H^n(f(A))>0$;
\item[(B)] 
There is a set $K \subset A$ with positive Lebesgue measure, a 
bi-Lipschitz
$C^1$-diffeomorphism $G:U\to G(U)\subset\bbbr^{n+m}$ defined on an open set $U \supset K$ and a $\sqrt{n}$-Lipschitz map
$\pi: X\to\bbbr^n$ such that
$$
\pi\circ f\circ G^{-1}(x_1,\ldots,x_n,y_{1}\ldots,y_m)=(x_1,\ldots,x_n)
\quad\text{for all $(x,y)\in G(K)$}
$$
is a projection on the first $n$ coordinates when restricted to the set $G(K)$.
\end{itemize}
Moreover the mapping $F=f\circ G^{-1}$ defined on $G(K)$ satisfies
\begin{itemize}
\item[(C)] 
$F^{-1}(F(x,y)) \cap G(K) \subset \{x\} \times \bbbr^m$ for any $(x,y) \in G(K)$;
\item[(D)]
$F|_{(\bbbr^n \times \{y\}) \cap G(K)}$ is bi-Lipschitz for any $y \in \bbbr^m$. 
\end{itemize}
\end{theorem}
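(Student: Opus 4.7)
The strategy is to reduce the metric statement to the classical Euclidean implicit function theorem by producing a $\sqrt{n}$-Lipschitz map $\pi:X\to\bbbr^n$ for which $\pi\circ f$ has maximal-rank differential on a positive-measure subset of $A$, and then straightening $\pi\circ f$ by a $C^1$-diffeomorphism. The first step is measure-theoretic. Fix $\delta>0$ and choose a compact $A_1\subset A$ of positive Lebesgue measure with $\Theta^{*n}(f,x)\geq 3\delta$ for every $x\in A_1$; restricting to Lebesgue density points, \eqref{1410} yields for each such $x$ a sequence of cubes $Q(x,d_k)$, $d_k\to 0$, with
$$
\H^n_\infty\bigl(f(Q(x,d_k)\cap A)\bigr)\geq 2\delta\,\omega_n d_k^n.
$$

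The heart of the argument is the selection of $\pi$. Its prescribed form is
$$
\pi(y)=\bigl(d(y,a_1),\ldots,d(y,a_n)\bigr),
$$
which is $\sqrt n$-Lipschitz by Cauchy--Schwarz since each coordinate is $1$-Lipschitz. The base points $a_1,\dots,a_n\in X$ are selected via the metric change-of-variables tool of \cite{HajMal}: whenever $E\subset X$ has $\H^n_\infty(E)$ comparable to $(\diam E)^n$, one may choose $n$ points so that the corresponding distance coordinates do not collapse the Hausdorff $n$-content of $E$ by more than a bounded factor. Applying this to $E=f(Q(x,d_k)\cap A)$ and invoking a pigeonhole/selection argument over the compact set $f(A_1)$ produces a single tuple $(a_1,\ldots,a_n)$ and a subset $A_2\subset A_1$ of positive measure on which $\Theta^{*n}(\pi\circ f,x)>0$. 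Passing from the pointwise construction to a uniform choice of $\pi$ valid on a set of positive measure is what I expect to be the main obstacle.

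With $\pi$ fixed, the map $h:=\pi\circ f:A\to\bbbr^n$ is Lipschitz, so Kirszbraun extends it to Lipschitz $\bar h:\bbbr^{n+m}\to\bbbr^n$, which is differentiable a.e.\ by Rademacher. At every Lebesgue density point $x_0\in A_2$ of differentiability the condition $\Theta^{*n}(h,x_0)>0$ forces $\rank D\bar h(x_0)=n$: a lower-rank differential would confine $\bar h(B(x_0,t))$ to an $o(t)$-neighborhood of an affine subspace of dimension $<n$, making $\H^n_\infty$ of the image $o(t^n)$. A Lusin/Whitney approximation now replaces $\bar h$, off a set of arbitrarily small measure, by a $C^1$ map $g$ that coincides with $\bar h$ on a set $K\subset A_2$ of positive measure; at density points of $K$ where $D\bar h$ has rank $n$ we have $Dg=D\bar h$, so $Dg$ has rank $n$ there. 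The classical implicit function theorem applied to $g$ at such a density point $x_0$ furnishes an open neighborhood $U\ni x_0$ and a bi-Lipschitz $C^1$-diffeomorphism $G:U\to G(U)$ with $g\circ G^{-1}(x,y)=x$; since $g=h=\pi\circ f$ on $K$, replacing $K$ by $K\cap U$ (still of positive measure) gives conclusion (B).

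Statements (A), (C), and (D) are then formal consequences of (B). For (C), if $F(x_1,y_1)=F(x_2,y_2)$ in $G(K)$ then $x_1=\pi(F(x_1,y_1))=\pi(F(x_2,y_2))=x_2$. For (D), on any horizontal slice $(\bbbr^n\times\{y\})\cap G(K)$ the relation $\pi\circ F(x,y)=x$ makes the $\sqrt n$-Lipschitz map $\pi$ a one-sided inverse of $F$ restricted to the slice, which together with the Lipschitz estimate on $F$ forces $F|_{\text{slice}}$ to be bi-Lipschitz. For (A), $G(K)$ has positive $(n+m)$-Lebesgue measure, so by Fubini there exist $y$ for which $(\bbbr^n\times\{y\})\cap G(K)$ has positive $n$-dimensional Lebesgue measure; by (D), $F$ maps such a slice bi-Lipschitz into $X$, yielding $\H^n(f(A))\geq\H^n(F(G(K)))>0$.
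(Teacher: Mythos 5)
Your overall architecture matches the paper's: isometrically embed $X$ into $\ell^\infty$ via Kuratowski, produce a Lipschitz $\pi:X\to\bbbr^n$ with $\pi\circ f$ of maximal rank, straighten by a $C^1$ local diffeomorphism, and deduce (A), (C), (D) formally from (B). The deductions of (C), (D), and (A) at the end are essentially the paper's (the paper derives (A) directly from $\pi\circ F=\mathrm{proj}$ and Fubini, you route it through (D) and bi-Lipschitz images, but both are fine). However, the central step is not justified, and you flag this yourself.

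The gap is exactly the one you call ``the main obstacle'': producing a \emph{single} $\pi$ for which $\Theta^{*n}(\pi\circ f,\cdot)>0$ on a positive-measure set. Two problems. First, the ``metric change-of-variables tool of \cite{HajMal}'' does not say that one can choose $n$ base points $a_1,\dots,a_n$ so that the distance-coordinate map fails to collapse $\H^n_\infty$ of a set $E$; what \cite{HajMal} (quoted in the paper as Lemma~\ref{5790}) says is an estimate on $\diam f(D)$ when the component-wise derivative of an $\ell^\infty$-valued Lipschitz map vanishes on most of a cube $D$. That is a collapsing lemma, not a non-collapsing/selection lemma, and it runs in the opposite direction from what you need. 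Second, even granting a pointwise selection for each $E=f(Q(x,d_k)\cap A)$, the passage to a uniform tuple $(a_1,\dots,a_n)$ valid on a positive-measure set is not supplied: the ``pigeonhole over the compact $f(A_1)$'' step is the entire difficulty, and compactness of $f(A_1)$ does not by itself stabilize the choice of base points across scales $d_k\to 0$ and across base points $x$.

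The paper sidesteps this selection problem by reordering the logic. Rather than finding $\pi$ first and then computing a rank, it works with the full $\ell^\infty$-valued map $\bar f=\K\circ f$ and its component-wise approximate derivative $\ap D\bar f$, and proves (Lemma~\ref{elllem}) that $\Theta^{*n}(f,\cdot)>0$ on a positive-measure set forces $\rank(\ap D\bar f)\ge n$ on a positive-measure set. This is done by contradiction: if the rank were some $j<n$ on a positive-measure set, Lemma~\ref{3.1} puts $\bar f$ locally (after a diffeomorphism and a permutation of finitely many $\ell^\infty$-coordinates) into the form that fixes the first $j$ coordinates, and then Lemma~\ref{HaMaLemma} — which is where the \cite{HajMal} collapsing estimate is actually used — shows the $n$-content of small images is $O(M^{j-n}d^n)$, i.e.\ the $n$-density vanishes, contradicting the hypothesis. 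Only after this rank bound is in hand does one apply Lemma~\ref{3.1} with $j=n$; the choice of $n$ coordinates to project onto (your $a_1,\dots,a_n$, up to constants coming from the Kuratowski embedding) comes out of the nonvanishing $n\times n$ minor of $\ap D\bar f$ at a density point, so no separate selection argument is ever needed. If you want to keep your ``choose $\pi$ first'' route, you would still have to prove something equivalent to Lemma~\ref{elllem} to justify the choice, so the paper's ordering is not just cosmetic.

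One minor point: the paper's $\pi$ is $P\circ\Psi\circ\K$ where $\K$ is a $1$-Lipschitz extension of the Kuratowski embedding of $f(A)$; its coordinates are distance functions shifted by constants, so your proposed form $\pi(y)=(d(y,a_1),\dots,d(y,a_n))$ is essentially right, and the $\sqrt n$-Lipschitz bound (with $1$-Lipschitz into $\ell^\infty_n$, cf.\ Remark~\ref{R145}) is also as in the paper.
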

\begin{remark}
It follows from the proof that we can exhaust the set of points where $\Theta^{*n}(f,x)>0$ by sets $K$ as in (B) up to a set of $\H^{n+m}$ measure zero.
(See the application of Lemma~\ref{federer} in the proof of Lemma~\ref{3.1} as well as Remark~\ref{bluerem}.)
\end{remark}
\begin{remark}
\label{R145}
The map $\pi:X\to\bbbr^n$ is in fact $1$-Lipschitz as a map from $X$ to $(\bbbr^n,\ell^\infty_n)$ where the norm $\ell^\infty_n$ is defined by $\Vert (x_1,\ldots,x_n)\Vert_\infty=\max_i|x_i|$. This will follow from our proof.
\end{remark}
\begin{remark}
Statement (C) means that the preimage under $F$ of any point in $F(G(K))=f(K)$ is contained in an $m$-dimensional subspace of $\bbbr^{n+m}$ orthogonal to $\bbbr^n$.
For related results about the structure of preimages $f^{-1}(z)$ of Lipschitz maps,
see \cite[Theorem~1.2]{karmanova}, \cite[Theorem~4.16]{reichel}.
\end{remark}
\begin{remark}
In fact, we will prove a quantitative lower bound in (D):
\begin{equation}
\label{404}
\Vert x_1-x_2\Vert_\infty\leq d(F(x_1,y),F(x_2,y))
\end{equation}
for any $y\in\bbbr^m$ and all $(x_1,y),(x_2,y)\in (\bbbr^n \times \{y\}) \cap G(K)$.
\end{remark}
\begin{remark}
The classical implicit function theorem is stated using a condition about the rank of the derivative of $f$, and the condition $\Theta^{*n}(f,x)>0$ is a related one. 
Indeed, in the case $X = \mathbb{R}^n$, we will see in Proposition~\ref{euclem} that the Jacobian of $f$
defined by
$|J^nf|(x) = \sqrt{\det (Df)(Df)^T(x)}$ satisfies 
$\Theta^{*n}(f,x) = |J^n f|(x)$  almost everywhere. See also Lemma~\ref{elllem} for the case of mappings
$f:A\to\ell^\infty$.
\end{remark}
\begin{remark}
In the theorem we cannot replace the density condition $\Theta^{*n}(f,x)>0$ by the simpler measure condition $\H^n(f(A))>0$. Indeed, even in the Euclidean case, Kaufmann \cite{kaufman} constructed a surjective $C^1$ mapping
$f:\bbbr^{n+1}\to\bbbr^n$, $n\geq 2$, satisfying $\rank Df\leq 1$ {\em everywhere}. For such a map, condition (B)
cannot be satisfied since it would imply that $\rank Df\geq n$ on $K$.
\end{remark}

Recall that a set $E\subset\bbbr^{n+m}$ is {\em countably $\H^m$-rectifiable}
if there are Lipschitz mappings $f_i:\bbbr^m\supset E_i\to\bbbr^{n+m}$, $i\in\mathbb{N}$, such that $\H^m(E\setminus\bigcup_{i=1}^\infty f(E_i))=0$.
As a corollary of Theorem~\ref{HardSard2} we obtain
\begin{corollary}
\label{HSC}
Fix a metric space $X$, 
a set $A \subset \bbbr^{n+m}$ with positive Lebesgue measure, 
and a Lipschitz mapping $f:A \to X$. 
Suppose $\Theta^{*n}(f,\cdot) > 0$ almost everywhere in $A$.
Then $f^{-1}(x)$ is countably $\H^m$-rectifiable for $\H^n$-almost all $x\in X$.
\end{corollary}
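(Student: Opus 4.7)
The plan is to use Theorem~\ref{HardSard2} as an exhaustion tool. By the remark following the theorem, iterating (B) yields a decomposition $A = N \cup \bigcup_{i=1}^{\infty} K_i$, where each $K_i$ satisfies conclusions (B)--(D) with a bi-Lipschitz $C^1$-diffeomorphism $G_i:U_i\to G_i(U_i)$ defined on an open set $U_i \supset K_i$, and where $\mathcal{H}^{n+m}(N)=0$. This is possible since by hypothesis $\Theta^{*n}(f,\cdot)>0$ almost everywhere on $A$.

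Fix $z\in X$ and $i$ with $z\in f(K_i)$. By property (C), there is a unique $x_i(z)\in\bbbr^n$ such that $f^{-1}(z)\cap K_i \subset G_i^{-1}(\{x_i(z)\}\times\bbbr^m)$. Define
$$
E_i(z)=\{y\in\bbbr^m:(x_i(z),y)\in G_i(U_i)\}, \qquad \phi_i^z:E_i(z)\to\bbbr^{n+m}, \quad \phi_i^z(y)=G_i^{-1}(x_i(z),y).
$$
Since $G_i^{-1}$ is Lipschitz on $G_i(U_i)$, $\phi_i^z$ is Lipschitz, and its image covers $f^{-1}(z)\cap K_i$. Thus $f^{-1}(z)\cap\bigcup_i K_i$ is covered by a countable family of Lipschitz images of subsets of $\bbbr^m$.

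It remains to show that $\mathcal{H}^m(f^{-1}(z)\cap N)=0$ for $\H^n$-almost every $z\in X$. This is precisely an Eilenberg-type coarea inequality for Lipschitz maps into metric spaces: since $f$ is Lipschitz and $\mathcal{H}^{n+m}(N)=0$, one has the upper-integral bound
$$
\int_X^{*}\mathcal{H}^m\!\left(f^{-1}(z)\cap N\right)d\mathcal{H}^n(z)\ \lesssim_{n,m}\ \Lip(f)^n\mathcal{H}^{n+m}(N)=0,
$$
so the integrand vanishes for $\mathcal{H}^n$-almost every $z$. Combining this with the previous step, for such $z$,
$$
f^{-1}(z)\ \setminus\ \bigcup_{i}\phi_i^z(E_i(z))\ \subset\ f^{-1}(z)\cap N
$$
has $\mathcal{H}^m$-measure zero, which is exactly the definition of countable $\mathcal{H}^m$-rectifiability.

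The main obstacle is really just the handling of the null set $N$, for which one must invoke the metric-valued coarea/Eilenberg inequality; the rest is structural bookkeeping from conclusions (B)--(D) of Theorem~\ref{HardSard2}, which directly package the preimages of $f$ on each $K_i$ into Lipschitz slices over $\bbbr^n$.
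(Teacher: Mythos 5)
Your proposal is correct and follows essentially the same route as the paper: exhaust $A$ up to a null set $N$ by countably many sets $K_i$ from part (B) of Theorem~\ref{HardSard2}, use part (C) to cover $f^{-1}(z)\cap K_i$ by a Lipschitz image (equivalently, a $C^1$ $m$-submanifold) $G_i^{-1}(\{x_i(z)\}\times\bbbr^m)$, and handle the null set $N$ via the Eilenberg-type coarea inequality for Lipschitz maps into metric spaces, which the paper records as Lemma~\ref{coarea} and Corollary~\ref{zero}.
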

See Section~\ref{HSCProof} for the proof.
For related results see \cite[Theorem~1.2]{karmanova}, \cite[Theorem~4.16]{reichel}.

Our result may be seen as a qualitative version of a theorem proven in 2012 by 
Azzam and Schul \cite{AzzSch}.
In that paper, the authors proved the following quantitative version of the IFT
for Lipschitz mappings into metric spaces:

\begin{theorem}[Quantitative metric IFT; Azzam and Schul, 2012]
\label{AS}
Fix a metric space $X$
and a $1$-Lipschitz mapping
$f:\bbbr^{n+m} \to X$. 
Suppose 
$0 < \H^n(f([0,1]^{n+m})) \leq 1$
and
\begin{equation}
\label{HCap}
0< \delta \leq \mathcal{H}_{\infty}^{n,m}(f,[0,1]^{n+m})
\end{equation}
for some $\delta > 0$.
Then there are constants $\Lambda=\Lambda(n,m,\delta)>1$ and $\eta=\eta(n,m,\delta)>0$,
a set $K \subset[0,1]^{n+m}$ with 
\begin{equation}
\label{KBound}
\mathcal{H}^{n+m}(K) \geq \eta,
\end{equation}
and a $\Lambda$-bi-Lipschitz homeomorphism $G:\bbbr^{n+m} \to \bbbr^{n+m}$
such that $F = f \circ G^{-1}$ satisfies
$$
F^{-1}(F(x,y)) \cap G(K) \subset \{x\} \times \bbbr^m
\quad \text{for any } (x,y)\in G(K) \subset \mathbb{R}^{n+m}
$$
and $F|_{ (\bbbr^n \times \{y\}) \cap G(K) }$ is $\Lambda$-bi-Lipschitz for any $y \in \bbbr^m$. 
\end{theorem}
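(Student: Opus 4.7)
The plan is to find a $1$-Lipschitz map $\pi : X \to (\bbbr^n,\ell_n^\infty)$ such that $\pi\circ f$ has rank-$n$ classical derivative on a positive-measure subset, and then to apply the classical implicit function theorem to $\pi\circ f$ to obtain $G$. Parts (C)--(D) will then follow from the $1$-Lipschitzness of $\pi$ into $\ell_n^\infty$, and part (A) will be immediate from (B).

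First, I would combine Lusin--Egoroff reductions with the cube observation~\eqref{1410} to restrict to a compact set $K_0 \subset A$ of positive measure on which $f$ is uniformly Lipschitz, $\Theta^{*n}(f,x) \geq \delta$ holds uniformly, and the estimate $\H_\infty^n(f(Q(x,d)\cap A)) \geq \tfrac{\delta}{2}\omega_n d^n$ is valid for all $x \in K_0$ and $d \in (0,d_0)$. Passing to Lebesgue density points further ensures that $A \setminus K_0$ is negligible inside small cubes around points of $K_0$. By metric differentiation, $f$ admits a seminorm-valued metric differential $\operatorname{md}_x f$ at almost every $x \in A$, and the uniform density lower bound forces $\ker(\operatorname{md}_x f)$ to have dimension at most $m$ at a.e.\ $x \in K_0$: otherwise $f(Q(x,d)\cap A)$ would be contained, up to error $o(d)$, in an effectively $(<n)$-dimensional set of $\H^n_\infty$-content $o(d^n)$, contradicting the density hypothesis.

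At such a point $x_0$ I would choose $n$ points $p_1,\ldots,p_n \in X$ and constants $c_i$ so that the coordinates $g_i(x) := d(f(x),p_i) - c_i$ of
\[
\pi(z) := \bigl(d(z,p_1)-c_1,\ldots,d(z,p_n)-c_n\bigr)
\]
composed with $f$ give a map differentiable a.e.\ on $\bbbr^{n+m}$ (via a McShane extension and Rademacher) whose derivative at $x_0$ has rank $n$; the $p_i$ are selected to realize $n$ linearly independent asymptotic directions identified by $\operatorname{md}_{x_0} f$. By construction $\pi$ is $1$-Lipschitz into $\ell^\infty_n$ and hence $\sqrt{n}$-Lipschitz into Euclidean $\bbbr^n$, confirming Remark~\ref{R145}. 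A Whitney-type refinement, on which the McShane extension $h$ of $\pi \circ f$ is genuinely $C^1$ with invertible leading $n\times n$ Jacobian block after a coordinate permutation, produces a closed subset $K \subset K_0$ of positive measure where the classical IFT applies: on a neighborhood $U \supset K$, the map $G(x,y):=(h(x,y),y)$ is a bi-Lipschitz $C^1$-diffeomorphism, and $\pi\circ f\circ G^{-1}$ is the projection onto the first $n$ coordinates on $G(K)$.

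To conclude, (A) holds because $\pi(f(K))$ has positive $\H^n$ measure in $\bbbr^n$ and $\pi$ is Lipschitz, so $\H^n(f(K))>0$. For (C), if $F(x_1,y_1)=F(x_2,y_2)$ for two points in $G(K)$, applying $\pi$ yields $x_1=x_2$. For the lower bound~\eqref{404} in (D),
\[
\|x_1-x_2\|_\infty = \|\pi(F(x_1,y))-\pi(F(x_2,y))\|_\infty \leq d(F(x_1,y),F(x_2,y)),
\]
while the upper Lipschitz bound is immediate from $F = f \circ G^{-1}$. The main obstacle is the construction of $\pi$: converting the scalar Hausdorff content lower bound into a vector-valued rank-$n$ derivative condition on an explicit tuple of distance functions. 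This is exactly where the metric change of variables of \cite{HajMal} should play the central role, supplying the quantitative bridge between $\H^n_\infty(f(Q))$ and the Jacobian of tuples $\bigl(d(f(\cdot),p_i)\bigr)_{i=1}^n$.
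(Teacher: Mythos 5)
Your proposal does not prove the statement at hand. Theorem~\ref{AS} is the \emph{quantitative} implicit function theorem of Azzam and Schul, which this paper only \emph{cites} and never proves; the hypotheses and conclusions you address are instead those of Theorem~\ref{HardSard2}, the paper's own main result. Concretely, you start from the assumption that $\Theta^{*n}(f,x)>0$ on a set of positive measure (the hypothesis of Theorem~\ref{HardSard2}), not from $\H^{n,m}_\infty(f,[0,1]^{n+m})\geq\delta$ (the hypothesis of Theorem~\ref{AS}); you refer to parts (A)--(D), Remark~\ref{R145}, and inequality~\eqref{404}, all of which belong to Theorem~\ref{HardSard2}; and you produce only a local diffeomorphism $G$ on a set of unquantified positive measure. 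Theorem~\ref{AS}, by contrast, demands a globally defined $\Lambda$-bi-Lipschitz $G:\bbbr^{n+m}\to\bbbr^{n+m}$ and a set $K$ with $\H^{n+m}(K)\geq\eta$ where $\Lambda,\eta$ depend only on $n,m,\delta$. Your toolkit (Lusin--Egoroff, Rademacher, Whitney/Federer approximation, density points) is inherently qualitative and cannot yield such bounds; the paper's own Proposition~\ref{fold} exhibits an obstruction of exactly this kind, showing that under the weaker hypothesis $\Theta^{*n}>0$ the measure of $K$ for a global bi-Lipschitz $G$ can be arbitrarily small.

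Even read as a proof of Theorem~\ref{HardSard2}, the argument leaves its central claim unproved, and you concede as much by calling it ``the main obstacle.'' The step where the density lower bound forces $\ker(\operatorname{md}_xf)$ to have dimension $\leq m$, and where $n$ points $p_1,\ldots,p_n\in X$ are chosen so that $x\mapsto(d(f(x),p_i))_i$ has a rank-$n$ derivative, is precisely what the paper establishes via Lemma~\ref{elllem}; its proof requires the covering argument of Lemma~\ref{HaMaLemma} built on Lemma~\ref{5790}, applied after the coordinate-straightening of Lemma~\ref{3.1}. Your heuristic --- that low rank would place $f(Q(x,d)\cap A)$ inside an ``effectively $(<n)$-dimensional set of content $o(d^n)$'' --- is not a proof, since $f(Q(x,d)\cap A)$ sits in an arbitrary metric space $X$ and controlling its $\H^n_\infty$ requires a genuine packing estimate. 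The paper sidesteps the metric differential entirely: it embeds $f(A)$ into $\ell^\infty$ via Kuratowski (Lemma~\ref{kura}), extends via McShane (Lemma~\ref{2.7}), and works with the component-wise approximate derivative, which is exactly the rigorous version of your ``tuple of distance functions'' idea, made to work through Federer's $C^1$-Lusin approximation rather than a Whitney-type refinement of a McShane extension.
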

The authors of \cite{AzzSch} call $\H_{\infty}^{n,m}$ the $(n,m)$-{\em Hausdorff content of} $f$.
It is defined for a Lipschitz map $f:Q\to X$ from a cube $Q\subset\bbbr^{n+m}$ to a metric space by
\begin{equation}
\label{AS12}
\H_\infty^{n,m}(f,Q)=
\inf \sum_{j=1}^\infty \H^n_\infty(f(Q_j))d_j^m,
\end{equation}
where the infimum is taken over all families of open pairwise disjoint cubes 
$Q_j\subset Q$ of side length $d_j$ that cover $Q$ up to a set of measure zero.

Note that Theorems~\ref{HardSard2} and \ref{AS} 
provide the same qualitative structure on the vertical and horizontal slices
of the preimage of $F$.
However, Theorem~\ref{AS} is a {\em quantitative} version of the metric IFT in the sense that it
provides the lower bound \eqref{KBound} 
which depends only on the dimensions $m$, $n$
and $\delta$  from \eqref{HCap}. Moreover, the mapping $G$ 
is a \emph{globally} defined $C$-bi-Lipschitz homomorphism
where $C$ depends only on $m$, $n$, and $\delta$.
Our result (Theorem~\ref{HardSard2}) does not contain these quantitative conclusions.
This is because the assumption \eqref{HCap} in Theorem~\ref{AS} 
is much stronger 
than the assumption 
that $\Theta^{*n}(f,x) > 0$ on a set of positive measure.
Indeed, Proposition~\ref{elllem2} shows that the positivity of $\Theta^{*n}(f,x)$
follows from the assumption \eqref{HCap}.
In fact, for any $\eps>0$,
one may construct a mapping $f:[0,1]^2 \to \mathbb{R}$
with $\Theta^{*1}(f,x) = 1$ almost everywhere
so that 
the set $K\subset\bbbr^2$ satisfying the conclusion of Theorem~\ref{HardSard2} 
(for a global bi-Lipschitz homeomorphism $G$) must 
satisfy $\mathcal{H}^{2}(K) < \eps$
(and hence \eqref{KBound} cannot hold).
See Proposition~\ref{fold} for the construction and a detailed statement.

On the other hand, while the assumptions of Theorem~\ref{HardSard2} are much weaker than those of Theorem~\ref{AS},
some of the conclusions seem stronger: (1) As we already pointed out, the condition about positivity
of $\Theta^{*n}(f,x)$ is much weaker than condition \eqref{HCap}; (2) Azzam and Schul assume that
$0<\H^n(f([0,1]^{n+m}))\leq 1$ while we do not assume anything about the Hausdorff measure of the image. In fact,
we prove the lower bound $\H^n(f(A))>0$ in (A) and finiteness of the measure of the image plays no role in our theorem; (3) Our mapping $G$ is a bi-Lipschitz $C^1$ diffeomorphism while their mapping $G$ is only a bi-Lipschitz map. However, their map is defined globally and ours is defined locally only; (4) While parts (C) and (D) are the same as the corresponding statements in Theorem~\ref{AS}, part (B) seems stronger than that. (C) and (D) easily follow from (B), but we do not know if (B) can be concluded from Theorem~\ref{AS};
(5) We obtain the quantitative lower bound estimate \eqref{404};
(6) At last, but not least, our proof is {\em much} simpler than that in \cite{AzzSch}.

The classical IFT states that a $C^1$ mapping has a nice structure near a point where the derivative has rank of a certain order. However, the classical IFT does not provide {\em any} estimate for the size of the set where the map is nice. Our result has the same feature as the classical one: we do not obtain any estimate for the size of the set $K$ except that it has a positive measure.

The main tool in the proof of Theorem~\ref{HardSard2}
will be the metric change of variables introduced in \cite{HajMal}.
This change of variables has been used to prove versions 
of Sard's theorem for Lipschitz mappings and BLD mappings into metric spaces \cite{HajMal,HajMalZim}.

This paper is organized as follows. In Section~\ref{prelim}
we collect basic definitions and lemmata needed in the proofs of Theorem~\ref{HardSard2}
and Corollary~\ref{HSC}.
In Sections~\ref{SecProof} and~\ref{HSCProof} we prove Theorem~\ref{HardSard2} and Corollary~\ref{HSC} respectively.
Finally, in Section~\ref{SecComp}, we prove some other results that help us compare Theorems~\ref{HardSard2} and~\ref{AS}, we prove that the condition $\H^{n,m}_\infty(f,Q)>0$ implies positivity of $\Theta^{*n}(f,x)$ on a set of positive measure (Proposition~\ref{elllem2}), 
we prove that, if 
$f:\bbbr^{n+m}\supset A\to\bbbr^n$ is Lipschitz, then 
$\Theta^{*n}(f,x)=\Theta_*^n(f,x)=|J^nf|(x)$ almost everywhere in $A$ (Proposition~\ref{euclem}), and 
we construct an example showing that we cannot obtain any lower bound for $\H^{n+m}(K)$ (Proposition~\ref{fold}).

Notation used in the paper is fairly standard. The $n$-dimensional Hausdorff measure will be denoted by $\H^n$. Note that in $\bbbr^n$, $\H^n$ equals the Lebesgue measure and we will use Hausdorff measure notation in place of the Lebesgue measure. Occasionally we will write $|E|$ to denote the Lebesgue measure of $E$. Notation
$\H^n_\infty$ will stand for the Hausdorff content defined above. The constant $\omega_n$ denotes the measure of the unit ball in $\bbbr^n$. The Banach space of bounded {\em real valued} sequences will be denoted by $\ell^\infty$.  Balls in metric spaces are denoted by $B(x,r)$, and $Q(x,d)$ denotes the Euclidean cube centered at $x$ with side length $d$. 
All cubes are assumed to have edges parallel to the coordinate axes. 
Occasionally a $k$-dimensional ball in a Euclidean space will be denoted by $B^k(x,r)$.
By a $\Lambda$-bi-Lipschitz homeomorphism $f:(X,d)\to (Y,\rho)$ we mean a homeomorphism satisfying $\Lambda^{-1}d(x,y)\leq \rho(f(x),f(y))\leq \Lambda d(x,y)$.
The tangent space to $\bbbr^k$ at $x\in\bbbr^k$ will be denoted by $T_x\bbbr^k$.
By $C$ we will denote a general constant whose value may change in a single string of estimates. Writing $C=C(n,m)$, for example, indicates that the constant $C$ depends on $n$ and $m$ only.

\noindent
{\bf Acknowledgements.} The authors would like to thank the referee for valuable comments that led to an improvement of the paper.

\section{Preliminaries}
\label{prelim}
In this section we collect basic definitions and results that will be used later on.

If $k>n$, then the $\H^n_\infty$ content of subsets of $\bbbr^k$ is very different from their Hausdorff measure. For example $\H^n_\infty(E)<\infty$ for any bounded set $E\subset\bbbr^k$, but
$\H^n(B)=\infty$ for any $k$-ball $B\subset\bbbr^k$. However, we have (see \cite[Theorem~2.6]{simon})
\begin{lemma}
\label{h=h}
$\H^n_\infty(E)=\H^n(E)$ for all sets $E\subset\bbbr^n$.
\end{lemma}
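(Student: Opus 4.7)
The plan is to establish the two inequalities separately. One direction is immediate from the definitions: since any countable cover of $E$ by sets of diameter at most $\delta$ is in particular a cover, taking the limit $\delta\to 0$ in the definition of $\H^n$ gives $\H^n_\infty(E)\leq \H^n(E)$ with no restriction on $E$ or on the ambient space.

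The substantive content is the reverse inequality for $E\subset\bbbr^n$. First I would reduce to showing $\mathcal{L}^n(E)\leq \H^n_\infty(E)$, where $\mathcal{L}^n$ denotes $n$-dimensional Lebesgue (outer) measure. This suffices because the classical identification $\mathcal{L}^n=\H^n$ on $\bbbr^n$, which is a standard result (and is built into the paper's notation: Lebesgue measure on $\bbbr^n$ is already denoted $\H^n$), gives $\H^n(E)=\mathcal{L}^n(E)\leq \H^n_\infty(E)$.

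To prove $\mathcal{L}^n(E)\leq \H^n_\infty(E)$, fix an arbitrary countable cover $E\subset\bigcup_i A_i$. Replacing each $A_i$ by its closure $\overline{A_i}$ does not alter the diameter, and closures are Borel, hence Lebesgue measurable. Then by countable subadditivity of $\mathcal{L}^n$,
$$
\mathcal{L}^n(E)\leq \sum_i \mathcal{L}^n(\overline{A_i}).
$$
Now I invoke the isodiametric inequality in $\bbbr^n$: for any Lebesgue measurable $A\subset\bbbr^n$,
$$
\mathcal{L}^n(A)\leq \frac{\omega_n}{2^n}(\diam A)^n,
$$
that is, among sets of a fixed diameter, balls have the largest volume. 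Applying this to each $\overline{A_i}$ and combining with the previous inequality yields
$$
\mathcal{L}^n(E)\leq \frac{\omega_n}{2^n}\sum_i (\diam A_i)^n.
$$
Taking the infimum over all admissible covers gives $\mathcal{L}^n(E)\leq \H^n_\infty(E)$, which completes the proof.

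The only nontrivial step is the appeal to the isodiametric inequality; everything else is bookkeeping. I would either cite this inequality from a standard reference (for instance Simon's notes, which the paper already refers to) or briefly note that it can be proven via Steiner symmetrization. The result then follows with no dependence on the regularity or measurability of the original set $E$.
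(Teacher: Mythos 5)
Your proof is correct, and it is the standard argument that underlies the reference the paper cites for this lemma (Simon, Theorem 2.6): the trivial inequality $\H^n_\infty\leq\H^n$ plus the isodiametric inequality combined with $\mathcal L^n=\H^n$ on $\bbbr^n$. The paper does not give its own proof but simply cites Simon, so there is nothing to contrast; your write-up, including the care taken to pass to closures before invoking measurability, is exactly the textbook route.
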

\begin{lemma}
\label{kura}
Every separable metric space admits an isometric embedding into $\ell^\infty$.
\end{lemma}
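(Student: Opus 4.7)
The plan is to carry out the classical Kuratowski–Fréchet embedding. Let $(X,d)$ be a separable metric space and fix a countable dense subset $\{x_k\}_{k=1}^\infty \subset X$ together with a basepoint $x_0 \in X$ (one may just take $x_0 = x_1$). I would define
$$
\Phi: X \to \ell^\infty, \qquad \Phi(x) = \bigl(d(x,x_k) - d(x_0,x_k)\bigr)_{k=1}^\infty,
$$
and verify that $\Phi$ takes values in $\ell^\infty$ and is an isometry.

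First I would check that $\Phi(x) \in \ell^\infty$: the reverse triangle inequality gives $|d(x,x_k) - d(x_0,x_k)| \leq d(x,x_0)$ uniformly in $k$, so each coordinate is bounded by $d(x,x_0)$ and the sequence lies in $\ell^\infty$. The subtraction of $d(x_0,x_k)$ is precisely what guarantees boundedness; without it, $d(x,x_k)$ could fail to be bounded in $k$.

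Next, I would prove $\|\Phi(x) - \Phi(y)\|_\infty = d(x,y)$. The upper bound $\|\Phi(x) - \Phi(y)\|_\infty \leq d(x,y)$ follows immediately because the $x_0$-terms cancel in each coordinate and the reverse triangle inequality gives $|d(x,x_k) - d(y,x_k)| \leq d(x,y)$ for every $k$. For the lower bound, fix $\eps > 0$ and use density to choose $k$ with $d(x,x_k) < \eps$. Then
$$
|d(x,x_k) - d(y,x_k)| \geq d(y,x_k) - d(x,x_k) \geq \bigl(d(x,y) - d(x,x_k)\bigr) - d(x,x_k) \geq d(x,y) - 2\eps,
$$
so $\|\Phi(x) - \Phi(y)\|_\infty \geq d(x,y) - 2\eps$. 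Letting $\eps \to 0$ yields the matching lower bound.

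There is no real obstacle here — the only subtle point is arranging the definition so that the sequence is bounded, which is handled by the basepoint subtraction, and then exploiting density to witness the full distance $d(x,y)$ in some coordinate. The argument uses only the triangle inequality and separability, and produces the desired isometric embedding into $\ell^\infty$.
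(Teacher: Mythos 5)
Your proof is correct and is precisely the Kuratowski embedding that the paper itself uses (same map, same basepoint subtraction); you have simply written out the boundedness and isometry verifications that the paper leaves as ``well known.'' No comment beyond that is needed.
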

Indeed, given $x_0\in X$ and a dense set $\{x_i\}_{i=1}^\infty$ in a separable metric space $(X,d)$, 
$$
X\ni x \mapsto \kappa(x)=(d(x,x_i)-d(x_i,x_0))_{i=1}^\infty\in\ell^\infty
$$ 
is an isometric embedding.
This is the well known Kuratowski embedding for metric spaces.

For a proof of the following elementary result, see \cite[Corollary~4.1.7]{HKST}.
\begin{lemma}
\label{2.7}
Let $Y$ be a metric space, let $E\subset Y$ and let $f:E\to\ell^\infty$ be an $L$-Lipschitz mapping. Then there is an $L$-Lipschitz mapping $F:Y\to\ell^\infty$ such that $F|_E=f$.
\end{lemma}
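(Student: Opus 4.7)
The plan is to apply the classical McShane extension to each coordinate of $f$ separately, and then verify that the resulting sequence-valued map both lies in $\ell^\infty$ pointwise and inherits the $L$-Lipschitz bound.

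First I would write $f(x)=(f_i(x))_{i=1}^\infty$ and observe that each coordinate function $f_i:E\to\bbbr$ is $L$-Lipschitz, since $|f_i(x)-f_i(x')|\leq \Vert f(x)-f(x')\Vert_{\ell^\infty}\leq L d(x,x')$. Fix any basepoint $x_0\in E$ (the case $E=\emptyset$ is trivial) and, for each $i$, define
\begin{equation*}
F_i(y):=\inf_{x\in E}\bigl(f_i(x)+L\, d(y,x)\bigr).
\end{equation*}
Using the bound $f_i(x)\geq f_i(x_0)-L\, d(x,x_0)$ together with the triangle inequality gives $f_i(x)+L\, d(y,x)\geq f_i(x_0)-L\, d(y,x_0)$, so the infimum is finite. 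A standard short computation (using $f_i(x)+L\, d(y,x)\leq f_i(x)+L\, d(y,y')+L\, d(y',x)$) shows that $F_i$ is $L$-Lipschitz on $Y$, and the Lipschitz property of $f_i$ on $E$ gives $F_i|_E=f_i$.

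Next I would check that $F(y):=(F_i(y))_{i=1}^\infty$ takes values in $\ell^\infty$. Using $F_i(x_0)=f_i(x_0)$ together with the Lipschitz bound for $F_i$,
\begin{equation*}
|F_i(y)|\leq |F_i(y)-f_i(x_0)|+|f_i(x_0)|\leq L\, d(y,x_0)+\Vert f(x_0)\Vert_{\ell^\infty},
\end{equation*}
which is a bound uniform in $i$, so $F(y)\in\ell^\infty$. The $L$-Lipschitz property of $F:Y\to\ell^\infty$ then follows at once from the coordinatewise bound:
\begin{equation*}
\Vert F(y)-F(y')\Vert_{\ell^\infty}=\sup_i|F_i(y)-F_i(y')|\leq L\, d(y,y').
\end{equation*}

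The only point requiring any care is the verification that $F(y)$ lies in $\ell^\infty$ rather than in the larger space of all real sequences; this is precisely why one anchors the coordinatewise McShane extensions at a common basepoint $x_0\in E$ and uses that $f(x_0)\in\ell^\infty$. There is no substantive obstacle beyond this bookkeeping.
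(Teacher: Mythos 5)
Your proposal is correct and follows essentially the same approach as the paper: a coordinatewise McShane extension, followed by the observation that the resulting map remains $L$-Lipschitz and is $\ell^\infty$-valued. The basepoint argument you use to verify $F(y)\in\ell^\infty$ is exactly the bookkeeping the paper alludes to when it says "it is easy to verify that the resulting map ... takes values in $\ell^\infty$."
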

The idea of the proof is very simple.
Each component $f_i$ of $f$ is $L$-Lipschitz and we define $F$ by extending each of the components of $f$ using the formula from the McShane extension. Then it is easy to verify that the resulting map is $L$-Lipschitz and it takes values in $\ell^\infty$.

Fix an integer $k \geq 1$,
and suppose $A \subset \mathbb{R}^k$ is measurable.
Recall that a function $f:A\to\bbbr$ is {\em approximately differentiable} at $x\in A$ if there is 
a measurable set $A_x\subset A$ and a linear map $L:\bbbr^n\to\bbbr$ such that $x$ is a density point of $A_x$ and 
$$
\lim_{A_x\ni y\to x}\frac{|f(y)-f(x)-L(y-x)|}{|y-x|}=0.
$$
$L$ is called the {\em approximate derivative} of $f$ at $x$ and is denoted by  $ap\, Df(x)$.
Recall also that $x\in E\subset\bbbr^k$ is a {\em density point} of $E$ if $\H^k(E\cap B(x,d))/(\omega_kd)^k\to 1$ as $d\to 0$.

If in addition $f:A\to\bbbr$ is Lipschitz, then the
approximate derivative $ap\, Df(x)$ exists for almost every $x \in A$.
This follows from the McShane extension and Rademacher's theorem.
Indeed, if $F:\bbbr^k\to\bbbr$ is a Lipschitz extension of $f$, then $\ap Df(x)$ exists
at {\em all} points of the set 
$$
E=\{x\in A:\, 
\text{$x$ is a density point of $A$ and $F$ is differentiable at $x$}\}. 
$$
Moreover $\ap Df(x)=DF(x)$ at points of the set $E$.

For a Lipschitz map $f=(f_1,f_2,\dots):A \to \ell^{\infty}$, we define the 
{\em component-wise approximate derivative} by 
$$
ap \, Df(x) := 
\left \lceil
\begin{array}{c}
ap \, Df_1(x) \\
ap \, Df_2(x) \\
\vdots
\end{array}
\right \rceil
$$
Since each component $f_i$ is Lipschitz,  $ap\, Df$ exists almost everywhere in $A$.

It is easy to see that the row and column ranks of this $\infty \times k$ matrix are equal, 
and $\rank (ap \, Df(x))$ equals the dimension of the image of $ap \, Df(x)$ in $\ell^{\infty}$.
It follows in particular that $\rank (ap \, Df(x)) \leq k$.

Let $V$ be a linear space of all real sequences. 
In particular, $\ell^\infty\subset V$, 
but we do not equip $V$ with any norm or topology.
If all components of a mapping $g=(g_1,g_2,\ldots):\bbbr^k\to V$ are differentiable at a point $x$, we will say that $g$ is {\em component-wise differentiable} at $x$ and
write
$$
Dg(x) := 
\left \lceil
\begin{array}{c}
Dg_1(x) \\
Dg_2(x) \\
\vdots
\end{array}
\right \rceil
$$
We will also need the following result of Federer (for a proof, see \cite[Theorem~1.69]{MalyZ}, \cite[Theorem~5.3]{simon}, \cite{Whitney}). 
\begin{lemma}
\label{federer}
If $A\subset\bbbr^k$ is measurable and
$f:A\to\bbbr$ is Lipschitz, then for any $\eps>0$ there is a function $g\in C^1(\bbbr^k)$ such that
$$
\H^k(\{x\in A:\, f(x)\neq g(x)\})<\eps.
$$
\end{lemma}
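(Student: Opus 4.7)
The plan is to combine a Lipschitz extension, Rademacher's theorem, Egoroff's theorem, and Whitney's extension theorem.

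First I would extend $f$ to an $L$-Lipschitz function $F:\bbbr^k\to\bbbr$ using the McShane extension (where $L$ is the Lipschitz constant of $f$). By Rademacher's theorem, $F$ is differentiable almost everywhere in $\bbbr^k$. Let $D\subset\bbbr^k$ denote the full-measure set of differentiability points of $F$, and let $E=A\cap D$, which satisfies $\H^k(A\setminus E)=0$.

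Next I would use Egoroff's theorem to upgrade almost-everywhere differentiability on $E$ to uniform differentiability on a large compact subset. For each $x\in E$ and each $r>0$, set
$$
\rho(x,r):=\sup_{0<|y-x|\le r}\frac{|F(y)-F(x)-DF(x)(y-x)|}{|y-x|}.
$$
Then $\rho(x,r)\to 0$ as $r\to 0^+$ for every $x\in E$, and $x\mapsto\rho(x,r)$ is measurable. By Egoroff's theorem (after intersecting with a large ball to get finite measure, and using inner regularity), there is a compact set $K\subset E$ with $\H^k(A\setminus K)<\eps$ on which this convergence is uniform; that is, there exists a modulus $\omega:(0,\infty)\to[0,\infty)$ with $\omega(r)\to 0$ as $r\to 0^+$ such that
$$
|F(y)-F(x)-DF(x)(y-x)|\le |y-x|\,\omega(|y-x|)
\quad\text{for all }x\in K,\ y\in\bbbr^k.
$$
Since $F$ is $L$-Lipschitz, $|DF(x)|\le L$ on $K$, so $x\mapsto DF(x)$ is a bounded measurable function on $K$; after shrinking $K$ slightly (using Lusin's theorem), we may further assume $DF$ is continuous on $K$.

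Now I would verify the Whitney compatibility conditions on $K$ with data $(F|_K, DF|_K)$. For $x,y\in K$, the uniform-differentiability inequality (applied symmetrically at $x$ and $y$) together with the continuity of $DF$ on the compact set $K$ yields
$$
|F(y)-F(x)-DF(x)(y-x)|=o(|y-x|)
\quad\text{as }|y-x|\to 0,\ x,y\in K,
$$
and the standard Whitney extension theorem then produces a function $g\in C^1(\bbbr^k)$ with $g=F$ and $Dg=DF$ on $K$. Since $F|_A=f$, we have $f=g$ on $K$, and $\H^k(\{x\in A:f(x)\ne g(x)\})\le \H^k(A\setminus K)<\eps$, which is the desired conclusion.

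The main obstacle is the careful passage from pointwise differentiability on the measurable set $E$ to the uniform first-order Taylor estimate needed to invoke Whitney's extension theorem; this is where Egoroff (plus, if desired, Lusin for continuity of the derivative) does the essential work. Once the Whitney hypotheses are in place, the remainder of the argument is a direct citation of Whitney's theorem, which is why the references in the paper point to \cite{MalyZ}, \cite{simon}, and \cite{Whitney}.
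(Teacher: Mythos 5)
Your proof takes the standard route — McShane extension, Rademacher, Egoroff (plus Lusin for continuity of $DF$), and then Whitney's $C^1$ extension theorem — which is exactly what the references cited by the paper (\cite{MalyZ}, \cite{simon}, \cite{Whitney}) do, so the approach is right and the Whitney compatibility check is set up correctly: uniform continuity of $DF|_K$ on the compact $K$ (from Lusin) plus the Egoroff-uniform Taylor remainder estimate are precisely the Whitney $C^1$ data.

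One gap to flag: as written, your Egoroff step only covers the case $\H^k(A)<\infty$. You say ``after intersecting with a large ball to get finite measure,'' but if $\H^k(A)=\infty$ then $A$ minus any compact $K$ has infinite measure, so $\H^k(A\setminus K)<\eps$ is unattainable this way, and the lemma as stated makes no finiteness assumption on $A$. The standard fix is to run your argument on a locally finite exhaustion: for each $j$ choose a compact $K_j\subset E$ lying in a fixed dyadic annulus, with $\H^k\bigl((A\cap\text{annulus}_j)\setminus K_j\bigr)<\eps 2^{-j}$ and with the Egoroff/Lusin conclusions holding on $K_j$; then $K=\bigcup_j K_j$ is a \emph{closed} set (not compact), and Whitney's $C^1$ extension theorem still applies because it requires the Taylor estimate only locally uniformly on compact subsets of the closed set $K$. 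Alternatively, one can observe that in this paper the lemma is only ever invoked after restricting to a subset of positive \emph{finite} measure, so your version suffices for the application; but then the statement you are proving is weaker than the one the lemma asserts.
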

It is easy to see that if $x_0$ is a density point of the set
\begin{equation}
\label{1456}
\{x\in A:\, f(x)=g(x)\},
\end{equation}
then $\ap Df(x_0)$ exists and $\ap Df(x_0)=Dg(x_0)$. In particular $Dg=\ap Df$ almost everywhere in the set \eqref{1456}.

The next lemma was proven in \cite[Proposition 2.3]{HajMal}.
\begin{lemma}
\label{5790}
Let $D \subset \bbbr^{k}$ be a cube or ball, and let $f:D \to \ell^{\infty}$ be $L$-Lipschitz.
Then
$$
\diam(f(D)) \leq C(k)L\mathcal{H}^{k}(D \setminus A)^{1/k},
$$
where $A = \{ x \in D \, : \, Df(x) = 0\}$ and $Df$ is the component-wise derivative of $f$.
\end{lemma}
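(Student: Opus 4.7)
The plan is to reduce to real-valued targets and then combine the relative isoperimetric inequality on $D$ with the coarea formula on the super-level sets of $f$. Because the $\ell^{\infty}$-norm satisfies $\diam(f(D)) = \sup_i \diam(f_i(D))$, and the hypothesis $Df(x) = 0$ forces $Df_i(x) = 0$ for every coordinate $i$ (so $\H^k(D \setminus \{Df_i = 0\}) \leq \H^k(D \setminus A)$), it suffices to establish the scalar statement.

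So fix an $L$-Lipschitz $f \colon D \to \bbbr$ with $\nabla f = 0$ a.e.\ on $A$, write $B = D \setminus A$, $d = \diam(f(D))$, and pick $x^\ast, y^\ast \in D$ with $f(x^\ast) - f(y^\ast) = d$; note $d \leq L \diam D$ by the Lipschitz property. For $t \in (f(y^\ast), f(x^\ast))$, set $E_t := \{f > t\} \cap D$. The Lipschitz bound gives $E_t \supseteq B(x^\ast, (f(x^\ast)-t)/L) \cap D$ and a symmetric inclusion for $D \setminus E_t$. Since $D$ is a cube or ball, $\H^k(B(x,r) \cap D) \geq c(k) \min(r, \diam D)^k$ for every $x \in D$, so for $t$ in the middle third of $(f(y^\ast), f(x^\ast))$ one has $\min(\H^k(E_t), \H^k(D \setminus E_t)) \geq c(k)(d/L)^k$. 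The relative isoperimetric inequality on cubes and balls then yields $\H^{k-1}(\{f = t\} \cap D) \geq c(k)(d/L)^{k-1}$ for such $t$, and the coarea formula gives
\begin{equation*}
L \, \H^k(B) \;\geq\; \int_D |\nabla f|\, d\H^k \;=\; \int_{\bbbr} \H^{k-1}\bigl(\{f = t\}\bigr)\, dt \;\geq\; \tfrac{d}{3} \cdot c(k) \left(\frac{d}{L}\right)^{k-1},
\end{equation*}
which rearranges to $d \leq C(k) L \, \H^k(B)^{1/k}$, as desired.

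The main obstacle is ensuring that both geometric inputs — the lower bound on $\H^k(B(x,r) \cap D)$ and the relative isoperimetric inequality — hold with a constant depending only on $k$ for cubes and balls at every scale. Both are standard scale-invariant consequences of the bounded eccentricity of these domains, but they must be invoked in the correct form (and one should also note that for a.e.\ $t$ the level set $\{f = t\}$ may be identified with the reduced boundary $\partial^{\ast} E_t$ inside $D$, so that the isoperimetric inequality applies to give the desired bound on the level sets).
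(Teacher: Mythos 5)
The paper does not reprove this lemma but cites it as Proposition~2.3 of \cite{HajMal}, where it is established via a potential-theoretic Morrey-type argument: one reduces to the scalar case exactly as you do, then bounds $|f_i(x)-\barint_D f_i|$ by a Riesz potential $C\int_D |\nabla f_i(z)|\,|x-z|^{1-k}\,dz$, and finally estimates that potential using the fact that $|\nabla f_i|\leq L$ is supported in $B=D\setminus A$, so a symmetric-decreasing rearrangement (the ``bathtub'' principle) gives $\int_B L|x-z|^{1-k}\,dz\leq CL\,\H^k(B)^{1/k}$. Your argument reaches the same conclusion by a genuinely different route: instead of a pointwise potential estimate you pass to the super-level sets $E_t=\{f>t\}\cap D$, use the Lipschitz bound to show that for $t$ in the middle third of the range both $E_t$ and $D\setminus E_t$ have volume $\gtrsim(d/L)^k$, invoke the relative isoperimetric inequality on $D$ to get $\H^{k-1}(\partial^*E_t\cap D)\gtrsim(d/L)^{k-1}$, and integrate via the coarea formula against $\int_D|\nabla f|\leq L\H^k(B)$. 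Both proofs are correct and of comparable length. The Riesz-potential route is slightly more self-contained (only rearrangement of a convolution kernel is needed) and yields the sharper pointwise oscillation estimate $|f_i(x)-\barint_D f_i|\leq CL\H^k(B)^{1/k}$; your coarea route depends on the relative isoperimetric inequality on cubes and balls and on the identification $\partial^*E_t\cap D\subset f^{-1}(t)\cap D$ for a.e.\ $t$, but it makes the role of the level-set geometry transparent. Two small points worth making explicit in a final write-up: (i) you need $d\leq L\,\diam D$ to guarantee that $\min(r,\diam D)=r$ in the volume lower bound, and (ii) the isoperimetric constant for cubes and balls is scale-invariant, which is why a single $C(k)$ suffices.
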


Finally, in the proof of Corollary~\ref{HSC} we will need
\begin{lemma}
\label{coarea}
If $f:X\to Y$ is a Lipschitz mapping between metric spaces and $A\subset X$, $0\leq m\leq n$, then
$$
\int_Y^*\H^{n-m}(f^{-1}(y)\cap A)\, d\H^m(y)\leq (\lip f)^m
\frac{\omega_{n-m}\omega_m}{\omega_n}\,  \H^n(A).
$$
\end{lemma}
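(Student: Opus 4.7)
The plan is to prove this by the standard covering argument at a fixed scale $\delta>0$ and then let $\delta\to 0$. Let $L=\lip f$. Fix $\delta>0$ and choose a covering $\{E_i\}_{i=1}^\infty$ of $A$ by sets with $\diam E_i<\delta$ that is nearly optimal for $\H^n_\delta(A)$, meaning
$$
\sum_{i=1}^\infty \frac{\omega_n}{2^n}(\diam E_i)^n \leq \H^n_\delta(A)+\eps.
$$
The point is that if $y\in Y$, then the subfamily $\{E_i:y\in f(E_i)\}$ covers $f^{-1}(y)\cap A$, and each such $E_i$ has diameter $<\delta$, so by the definition of Hausdorff measure at scale $\delta$,
$$
\H^{n-m}_\delta(f^{-1}(y)\cap A)\leq \sum_{i=1}^\infty \frac{\omega_{n-m}}{2^{n-m}}(\diam E_i)^{n-m}\,\chi_{f(E_i)}(y).
$$

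Next I would integrate this pointwise inequality in $y$. Since $\diam f(E_i)\leq L\diam E_i<L\delta$, the elementary bound $\H^m_{L\delta}(f(E_i))\leq (\omega_m/2^m)(L\diam E_i)^m$ holds. The right-hand side above is a (non-negative) measurable function of $y$, so the upper integral is majorized by its honest integral, and by monotone convergence for series,
$$
\int_Y^* \H^{n-m}_\delta(f^{-1}(y)\cap A)\,d\H^m_{L\delta}(y)\leq \sum_{i=1}^\infty \frac{\omega_{n-m}}{2^{n-m}}(\diam E_i)^{n-m}\,\H^m_{L\delta}(f(E_i)).
$$
Plugging in the estimate for $\H^m_{L\delta}(f(E_i))$, combining the $(\diam E_i)^{n-m}$ and $(\diam E_i)^m$ factors, and using the choice of the covering, we obtain
$$
\int_Y^* \H^{n-m}_\delta(f^{-1}(y)\cap A)\,d\H^m_{L\delta}(y)\leq L^m\frac{\omega_{n-m}\omega_m}{\omega_n}(\H^n_\delta(A)+\eps).
$$
Since $\eps>0$ was arbitrary, it can be removed.

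Finally, I would let $\delta\to 0$. As $\delta\downarrow 0$, both $\H^{n-m}_\delta$ and $\H^m_{L\delta}$ increase monotonically to the Hausdorff measures $\H^{n-m}$ and $\H^m$, respectively, and $\H^n_\delta(A)\to \H^n(A)$. Monotone convergence applied on the left (valid for upper integrals since the integrands increase to the claimed integrand, and the underlying measure on $Y$ also increases) yields the stated inequality. The main technical obstacle is precisely this last passage: one must ensure that the upper-integral/monotone-convergence step is justified when both the integrand and the measure against which we integrate depend on $\delta$. This is handled by noting that for each fixed $\delta_0>0$ and every $\delta<\delta_0$ we have $\H^m_{L\delta_0}\leq \H^m_{L\delta}\leq \H^m$, so the inequality with $\H^m_{L\delta_0}$ on the left and $\H^n_\delta(A)$ on the right can be passed to the limit in $\delta$ first (using monotone convergence for $\H^{n-m}_\delta\uparrow \H^{n-m}$ against the fixed measure $\H^m_{L\delta_0}$), and then in $\delta_0$ via monotone convergence of $\H^m_{L\delta_0}\uparrow \H^m$ on the fixed non-negative integrand $\H^{n-m}(f^{-1}(\cdot)\cap A)$.
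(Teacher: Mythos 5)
The paper does not prove this lemma. After stating it, the text records that Federer (Thm.\ 2.10.25) proved it under additional assumptions, that the general case is due to Davies, and that a complete proof is in Reichel's thesis. So there is no in-paper proof to compare against; the relevant question is whether your sketch would, on its own, deliver a complete proof of this Eilenberg-type inequality. As written, it would not.

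Your pointwise estimate $\H^{n-m}_\delta(f^{-1}(y)\cap A)\le \sum_i (\omega_{n-m}/2^{n-m})(\diam E_i)^{n-m}\chi_{f(E_i)}(y)$ is correct and is the classical starting point. The gap is in the passage from this pointwise bound to the integrated inequality, and that passage is precisely where the nontrivial mathematics lives. First, you assert that $\sum_i c_i\chi_{f(E_i)}$ is ``a (non-negative) measurable function of $y$''; this requires the images $f(E_i)\subset Y$ to be $\H^m$-measurable, which can fail for arbitrary coverings $\{E_i\}$ and an arbitrary metric space $Y$. This measurability obstruction is exactly what forces Federer's ``additional assumptions.'' Second, you integrate against $\H^m_{L\delta}$, which for $\delta>0$ is only a pre-measure: it is an outer measure but not a metric one, its Carath\'eodory $\sigma$-algebra need not contain the Borel sets, and there is no off-the-shelf Lebesgue integration theory for it. The two properties you invoke for such an integral --- countable subadditivity across the series $\sum_i c_i\chi_{f(E_i)}$, and monotone convergence as $\H^{n-m}_\delta\uparrow\H^{n-m}$ against the fixed set function $\H^m_{L\delta_0}$ --- are not automatic. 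In particular, that monotone-convergence step needs continuity from below of $\H^m_{L\delta_0}$ along arbitrary increasing sequences of possibly non-measurable sets, which is precisely Davies' increasing sets theorem, the very reference the paper cites as what settled the fully general case. Without establishing or at least invoking these facts, your argument does not close; the outline is the right shape, but the hard part of the proof has been skipped.
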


Here $\int^*$ stands for the upper integral and $\lip f$ is a Lipschitz constant of $f$. Federer \cite[2.10.25]{federer}
proved this result under additional assumptions. The general case was obtained by Davies \cite{davies}. A detailed proof is given in \cite[Theorem~2.4]{reichel}.
\begin{corollary}
\label{zero}
If $f:X\to Y$ is  Lipschitz mapping between metric spaces and $A\subset X$, 
$\H^n(A)=0$,
$0\leq m\leq n$, then
$\H^{n-m}(f^{-1}(y)\cap A)=0$ for 
$\H^m$ almost all $y\in Y$.
\end{corollary}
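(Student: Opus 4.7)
The plan is to deduce the corollary directly from the coarea-type inequality in Lemma~\ref{coarea}. Given the hypothesis $\H^n(A)=0$, plugging $A$ into that inequality yields
$$
\int_Y^*\H^{n-m}(f^{-1}(y)\cap A)\, d\H^m(y)\leq (\lip f)^m \frac{\omega_{n-m}\omega_m}{\omega_n}\, \H^n(A)=0.
$$
Thus the upper integral of the nonnegative function $y\mapsto \H^{n-m}(f^{-1}(y)\cap A)$ with respect to $\H^m$ vanishes.

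The only substantive step is to conclude from this that the integrand itself is zero $\H^m$-almost everywhere. By definition of the upper integral there exists a sequence of $\H^m$-measurable majorants $h_k\geq \H^{n-m}(f^{-1}(\cdot)\cap A)$ with $\int_Y h_k\, d\H^m\to 0$. The function $h:=\liminf_k h_k$ is $\H^m$-measurable, still dominates $\H^{n-m}(f^{-1}(\cdot)\cap A)$, and satisfies $\int_Y h\, d\H^m=0$ by Fatou's lemma, so $h=0$ $\H^m$-almost everywhere. Hence $\H^{n-m}(f^{-1}(y)\cap A)=0$ for $\H^m$-almost every $y\in Y$, which is the desired conclusion.

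The proof is essentially a direct invocation of Lemma~\ref{coarea} together with this standard fact about upper integrals; no obstacle is expected, since all the nontrivial analytic content (the coarea inequality of Davies, which does not require measurability of $f^{-1}(y)\cap A$) is already packaged into Lemma~\ref{coarea}.
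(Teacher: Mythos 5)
Your proof is correct and matches what the paper intends: the corollary is stated without an explicit proof because it follows immediately from Lemma~\ref{coarea} in exactly the way you describe, and your Fatou argument correctly justifies the standard fact that a nonnegative function with vanishing upper integral is zero almost everywhere.
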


\section{Proof of Theorem \ref{HardSard2}}
\label{SecProof}
The proof is based on techniques developed in \cite{HajMal} (see also \cite{HajMalZim}).
Consider a Lipschitz map $f:A\to\ell^\infty$ defined on a measurable set $A\subset\bbbr^k$.
Our first lemma shows that, 
if the rank of $\ap Df(x)$ is at least $j$ on a set of positive measure,
then, up to local diffeomorphisms, 
$f$ fixes the first $j$ coordinates on some non-null subset.
\begin{lemma}
\label{3.1}
Suppose $f:A\to\ell^\infty$ is a Lipschitz map defined on a measurable set $A\subset\bbbr^k$. If
$\rank (\ap Df(x)) \geq j$ on a subset of $A$ of positive $\H^k$-measure, then there is an open set
$U\subset\bbbr^k$, a set $K\subset A\cap U$ of positive $\H^k$-measure, a bi-Lipschitz
$C^1$-diffeomorphism $G:U\to G(U)\subset\bbbr^k$, 
and a permutation of a finite number of coordinates $\Psi:\ell^\infty\to\ell^\infty$
(which is an isometry of $\ell^\infty$) such that
\begin{equation}
\label{2020}
(\Psi\circ f\circ G^{-1})_i(x)=x_i
\quad
\text{for $i=1,2,\ldots,j$ and $x\in G(K)$.}
\end{equation}
That is for $x\in G(K)$ we have
$$
(\Psi\circ f\circ G^{-1})(x_1,\ldots,x_n)=
(x_1,\ldots,x_j,(\Psi\circ f\circ G^{-1})_{j+1}(x),(\Psi\circ f\circ G^{-1})_{j+2}(x),\ldots).
$$
\end{lemma}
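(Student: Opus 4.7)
The idea is to straighten out $j$ of the components of $f$ by a change of variables in the domain, using Federer's approximation theorem (Lemma~\ref{federer}) to replace those components with genuine $C^1$ functions and then applying the classical inverse function theorem.

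First, I would do a combinatorial reduction. Since $\rank(\ap Df(x))\geq j$ on a positive-measure subset $A_0\subset A$, at every $x\in A_0$ there exist indices $i_1(x)<\cdots<i_j(x)$ in $\bbbn$ and $l_1(x)<\cdots<l_j(x)$ in $\{1,\ldots,k\}$ such that the $j\times j$ minor of $\ap Df(x)$ formed by rows $i_1,\ldots,i_j$ and columns $l_1,\ldots,l_j$ is invertible. Since the possible choices of $\{l_1,\ldots,l_j\}$ are finite and the possible choices of $\{i_1,\ldots,i_j\}$ are countable, a pigeonhole argument produces fixed index sets $I=\{i_1<\cdots<i_j\}\subset\bbbn$ and $L=\{l_1<\cdots<l_j\}\subset\{1,\ldots,k\}$ and a subset $A_1\subset A_0$ of positive measure on which this particular minor is invertible. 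Let $\Psi:\ell^\infty\to\ell^\infty$ be the permutation of coordinates sending $I$ to $\{1,\ldots,j\}$ (extended by shifting the remaining indices), which is an isometry; let $P:\bbbr^k\to\bbbr^k$ be the linear permutation sending $L$ to $\{1,\ldots,j\}$. Replacing $f$ with $\Psi\circ f\circ P^{-1}$ and $A$ with $P(A)$ (and absorbing $P^{-1}$ into $G$ at the end), we may assume that $I=L=\{1,\ldots,j\}$, i.e.\ the $j\times j$ matrix $M(x)=(\partial_l \ap f_i(x))_{i,l=1}^j$ is invertible for $x\in A_1$.

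Next, apply Lemma~\ref{federer} to each of $f_1,\ldots,f_j$ to produce functions $g_1,\ldots,g_j\in C^1(\bbbr^k)$ that agree with $f_1,\ldots,f_j$ respectively on a subset $A_2\subset A_1$ with $\H^k(A_1\setminus A_2)$ arbitrarily small, so $\H^k(A_2)>0$. By the remark following Lemma~\ref{federer}, at every density point of $A_2$ the approximate derivatives $\ap Df_i$ agree with $Dg_i$ for $i=1,\ldots,j$. Define
$$
G(x_1,\ldots,x_k)=\bigl(g_1(x),\ldots,g_j(x),\,x_{j+1},\ldots,x_k\bigr).
$$
Then $G\in C^1(\bbbr^k;\bbbr^k)$, and at a density point $x_0\in A_2$ (of which $A_2$ has almost all of its points), the Jacobian $DG(x_0)$ has block-triangular form with the invertible block $M(x_0)$ in the upper-left and the identity in the lower-right, so $DG(x_0)$ is invertible. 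By the classical inverse function theorem there is an open neighborhood $U\subset\bbbr^k$ of $x_0$ on which $G$ is a bi-Lipschitz $C^1$-diffeomorphism onto $G(U)$.

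Finally, set $K=A_2\cap U$. Shrinking $U$ if necessary to a smaller neighborhood of $x_0$, we still have $\H^k(K)>0$ because $x_0$ is a density point of $A_2$. For $x\in K$ and $i\in\{1,\ldots,j\}$, $f_i(x)=g_i(x)=G_i(x)$, hence, writing $y=G(x)\in G(K)$,
$$
(f\circ G^{-1})_i(y)=f_i(x)=G_i(x)=y_i,
$$
which is precisely \eqref{2020} (with the $\Psi$ and $P$ folded back in, as noted above). The main technical obstacle is the pigeonhole reduction together with the measure bookkeeping to ensure that after intersecting with the agreement set, a density point of $A_2$, and the neighborhood $U$, the remaining set $K$ still carries positive $\H^k$-measure; this is handled by taking the Federer approximation accurate enough that $\H^k(A_1\setminus A_2)<\tfrac12\H^k(A_1)$ and using the Lebesgue density theorem inside $A_2$.
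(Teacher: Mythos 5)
Your proof is correct and follows essentially the same strategy as the paper: reduce to the $C^1$ case via Federer's approximation, then straighten coordinates with the classical inverse function theorem applied to the map $(g_1,\ldots,g_j,x_{j+1},\ldots,x_k)$. The one organizational difference is the order of operations: the paper applies Lemma~\ref{federer} component-wise to \emph{all} components of $f$ first (producing a sequence $g=(g_1,g_2,\ldots)$ with every component $C^1$, and a set $F$ where $g=f$ and $Dg=\ap Df$), and only then picks a density point $x_0\in F$ and chooses a $j\times j$ minor of $Dg(x_0)$ of full rank; you instead run a pigeonhole argument over the countably many choices of $j$ rows and the finitely many choices of $j$ columns to fix a minor that is invertible on a positive-measure subset $A_1$ \emph{before} invoking Federer, so that only $j$ components need to be approximated. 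Your ordering sidesteps the minor bookkeeping of applying Federer countably many times (choosing summable error tolerances so that the common agreement set retains positive measure), which the paper glosses over; the paper's ordering avoids the pigeonhole entirely since the choice of minor is made pointwise at $x_0$. Both variants are sound and the rest of the argument (block-triangular structure of $DG(x_0)$, invertibility at a density point, shrinking $U$ to get bi-Lipschitzness, and taking $K=A_2\cap U$) matches the paper's reasoning.
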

\begin{proof}
By restricting $f$ to the set where $\rank (\ap Df(x)) \geq j$, we may assume that
$\rank (\ap Df(x)) \geq j$ a.e. in $A$.
Since $f=(f_1,f_2,\dots):A \to \ell^{\infty}$ is Lipschitz, 
each component $f_i$ of $f$ is Lipschitz.
Therefore, by applying Lemma~\ref{federer}
component-wise, we may choose $F \subset A$ with $\H^{k}(F)>0$
and a mapping $g=(g_1,g_2,\dots):\mathbb{R}^{k} \to V$ 
with $g_j \in C^1(\bbbr^{k})$ for every $j \in \mathbb{N}$
and such that $g=f$, $Dg = \ap Df$, 
and $\rank Dg = \rank \ap Df \geq j$ on $F$.
Here, as before, $V$ is the vector space consisting of all real valued sequences.
(This is needed since sequences $(g_i(x))_{i=1}^\infty$ are not necessarily bounded.)

\begin{lemma}
Fix $x_0 \in F$.
Under the above assumptions, there is a bi-Lipschitz $C^1$-diffeomorphism 
$G:U \to G(U)\subset\bbbr^k$
defined on a neighborhood $U$ of $x_0$ and a permutation $\Psi:V\to V$ of a finite number of coordinates 
so that
$$
(\Psi\circ g\circ G^{-1})_i(x)=x_i
\quad\text{for $i=1,2,\ldots,j$ and $x\in G(U)$}.
$$
That is, $\Psi\circ g\circ G^{-1}$ fixes the first $j$ coordinates on $G(U)$.
\end{lemma}
\begin{proof}
Since $\rank Dg(x_0)\geq j$, a certain $j\times j$ minor of $Dg(x_0)$ has rank $j$.
By precomposing $g$ with a permutation $\tilde{\Psi}$ of $j$ variables in $\bbbr^k$ and postcomposing it
with a permutation $\Psi$  of $j$ variables in $V$, we have that
$$
\tilde{g}=(\tilde{g}_1,\tilde{g}_2,\ldots)=\Psi\circ g\circ\tilde{\Psi}
$$
satisfies
\begin{equation}
\label{ti382}
\det\left[\frac{\partial\tilde{g}_m}{\partial x_\ell}(\tilde{\Psi}^{-1}(x_0))\right]_{1\leq m,\ell\leq j}\neq 0.
\end{equation}
Let
$$
H(x)=(\tilde{g}_1(x),\ldots,\tilde{g}_j(x), x_{j+1},\ldots,x_k).
$$
It follows from \eqref{ti382} that $\det DH(\tilde{\Psi}^{-1}(x_0))\neq 0$, so $H$ is a diffeomorphism in a neighborhood $\tilde{U}$ of $\tilde{\Psi}^{-1}(x_0)$. Replacing $\tilde{U}$ by a smaller open set, it follows that $H$ is bi-Lipschitz. Now observe that
$$
(\tilde{g}\circ H^{-1})_i(x)=x_i
\quad
\text{for $i=1,2,\ldots,j$ and $x\in H(\tilde{U})$}.
$$
Therefore, if we write $G=H\circ\tilde{\Psi}^{-1}$,
then $\Psi\circ g\circ G^{-1}=\tilde{g}\circ H^{-1}$ 
satisfies the claim of the lemma
on the open set $U=\tilde{\Psi}(\tilde{U})$,
$U$ is a neighborhood of $x_0$, and $G(U)=H(\tilde{U})$.
\end{proof}

Now if $x_0$ is any density point of $F$, 
then the set $K=F\cap U$ has positive measure.
Since $f=g$ on $K$, \eqref{2020} follows because
the permutation of coordinates $\Psi:V\to V$ maps $\ell^\infty\subset V$ to
$\ell^\infty\subset V$ in an isometric way.
This completes the proof of Lemma~\ref{3.1}.
\end{proof}

\begin{lemma}
\label{elllem}
Fix a measurable set $A \subset \bbbr^k$ and $n \leq k$.
Suppose $f:A \to \ell^{\infty}$ is a Lipschitz map. 
If $\Theta^{*n}(f,x)>0$ on a subset of $A$ of positive measure, then
$\rank (\ap Df(x)) \geq n$ on a set of positive measure.
\end{lemma}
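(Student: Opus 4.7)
The plan is to prove the contrapositive: assuming $\rank(\ap Df(x))\leq n-1$ almost everywhere on $A$, I would show $\Theta^{*n}(f,x)=0$ almost everywhere on $A$, contradicting the hypothesis. The argument rests on a reduction to a componentwise $C^1$ setting via Lemma~\ref{federer}, coordinate-straightening via the construction in the proof of Lemma~\ref{3.1}, and a fibered application of Lemma~\ref{5790}.

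First I would reduce to $C^1$. Extending $f$ via Lemma~\ref{2.7} to a Lipschitz map $\tilde f:\bbbr^k\to\ell^\infty$ and applying Lemma~\ref{federer} to each component $\tilde f_i$ with summable errors $2^{-m-i}$, I obtain sets $\tilde F_{m_0}=\bigcap_{m\geq m_0}F_m\subset A$ with $|A\setminus\tilde F_{m_0}|\to 0$ as $m_0\to\infty$, on each of which $\tilde f$ coincides with a componentwise $C^1$ map $g^{(m_0)}:\bbbr^k\to V$ and $\ap Df=Dg^{(m_0)}$. A Borel--Cantelli argument then shows that almost every $x\in A$ is a density point of some $\tilde F_{m_0}$.

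At such a density point $x_0$, the rank hypothesis gives $j:=\rank Dg^{(m_0)}(x_0)\leq n-1$, and the construction in the proof of Lemma~\ref{3.1} supplies a bi-Lipschitz $C^1$-diffeomorphism $G$ on a neighborhood of $x_0$ and an $\ell^\infty$-isometric coordinate permutation $\Psi$ such that $(\Psi\circ g^{(m_0)}\circ G^{-1})_i(x)=x_i$ for $i\leq j$ on the image neighborhood. Since the identity block forces $\rank D(\Psi\circ g^{(m_0)}\circ G^{-1})\geq j$ while the hypothesis bounds it by $n-1$ (WLOG $j=n-1$), the remaining rows of the derivative have vanishing last $k-j$ entries throughout $G(\tilde F_{m_0})$; hence the componentwise partial derivatives of $\Psi\circ\tilde f\circ G^{-1}$ in the fiber directions $x_{j+1},\ldots,x_k$ vanish at almost every point of $G(\tilde F_{m_0})$.

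I would then estimate $\H^n_\infty((\Psi\circ\tilde f\circ G^{-1})(Q))$ for a small cube $Q=Q(G(x_0),d)$ by slicing $\bbbr^k=\bbbr^j\times\bbbr^{k-j}$ into base and fiber. By Fubini applied to $|Q\setminus G(\tilde F_{m_0})|=o(d^k)$ together with a Markov-type estimate, all but a vanishing fraction of bases $x'\in\bbbr^j$ yield fiber slices on which the critical set (where the componentwise fiber-derivative of $\Psi\circ\tilde f\circ G^{-1}$ vanishes) has complement of measure $o(d^{k-j})$; Lemma~\ref{5790} then delivers fiber images of $\ell^\infty$-diameter $o(d)$. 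A covering by $O((d/o(d))^j)$ cells of diameter $o(d)$ gives a contribution of order $d^j\cdot o(d)^{n-j}=o(d^n)$, since $n-j\geq 1$, while the exceptional bases contribute an extra $o(d^n)$ via a Vitali-type cover using the Lipschitz bound on $\tilde f$. Undoing $G$ and $\Psi$ yields $\Theta^{*n}(f,x_0)=0$, contradicting the choice of $x_0$.

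The principal obstacle is the passage from componentwise vanishing of partial derivatives (at density points of $G(\tilde F_{m_0})$) to a uniform $\ell^\infty$-diameter bound on fiber images. This is precisely where Lemma~\ref{5790} — the metric change-of-variables tool from \cite{HajMal} — is essential: it converts measure-smallness of the componentwise critical set into a diameter estimate that is uniform across all components of the $\ell^\infty$-valued target, something that cannot be extracted from componentwise approximate differentiability alone.
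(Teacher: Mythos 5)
Your overall plan matches the paper's proof quite closely: reduce to a $C^1$ representative via Lemma~\ref{federer}, straighten coordinates using the construction inside the proof of Lemma~\ref{3.1}, observe that the fiber directional derivatives vanish, and then combine Fubini with Lemma~\ref{5790} to bound the Hausdorff content of the image of a small cube. The quantitative covering estimate you sketch ($M^j$ cells of diameter $\sim Ld/M$, giving content $\lesssim d^n M^{j-n}$) is precisely the paper's Lemma~\ref{HaMaLemma}. So the architecture is sound, and the use of Lemma~\ref{5790} as the engine converting measure-smallness of the componentwise critical set into a diameter bound is exactly right.

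However, there is a genuine gap in the step where you deduce that ``the remaining rows of the derivative have vanishing last $k-j$ entries throughout $G(\tilde F_{m_0})$.'' You fix $j := \rank Dg^{(m_0)}(x_0)$ at a single point $x_0$, and the identity block created by the straightening forces $\rank D(\Psi \circ g^{(m_0)} \circ G^{-1}) \geq j$ everywhere on the neighborhood. But your contrapositive hypothesis only gives $\rank \ap Df \leq n-1$ a.e., so at \emph{other} points of $G(\tilde F_{m_0})$ near $G(x_0)$ the rank may lie strictly between $j$ and $n-1$, in which case the last $k-j$ columns of the derivative need not vanish and the fiber derivative does not vanish. Your parenthetical ``(WLOG $j=n-1$)'' is where this issue is being swept under the rug: it is not without loss of generality, because $j$ is dictated by $x_0$ and the set of points with rank exactly $n-1$ may well have measure zero. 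The paper handles this by first \emph{stratifying} by rank: it picks $j\in\{0,\dots,n-1\}$ and a positive-measure set $F\subset A$ on which $\rank \ap Df$ equals $j$ \emph{identically}, and only then applies the straightening; the two-sided pinching $j \leq \rank \leq j$ is then automatic, and the fiber derivative vanishes on all of $\hat K$. Your argument needs the same stratification inserted before the straightening step (e.g., choose $x_0$ to be a density point of $\tilde F_{m_0}\cap\{\rank \ap Df = j\}$ for the value of $j$ whose stratum has positive measure near $x_0$); as written it only covers the top-rank case.

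Two smaller remarks. First, the Borel--Cantelli machinery to show $\Theta^{*n}(f,\cdot)=0$ almost everywhere is unnecessary overhead: the paper instead argues by contradiction, producing a \emph{single} density point $x_0$ where $\Theta^{*n}(f,x_0)>0$ and then showing $\Theta^{*n}(f,x_0)=0$, which requires only one application of Lemma~\ref{federer} to a single positive-measure set. Second, your final covering count is presented in $o(\cdot)$ notation and is a bit vague about how the grid size $d/M$ is chosen relative to $d$; the paper's Lemma~\ref{HaMaLemma} cleans this up by fixing an integer $M$ first, then taking $d$ small enough that the density bound $\mathcal H^k(Q(y_0,d)\setminus\hat K) < (d/M)^k$ holds, which makes the $M^{j-n}$ decay explicit.
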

\begin{remark}
Note that the above lemmata
involve Lipschitz mappings into $\ell^{\infty}$.
As we will see later, 
this will be sufficient in the setting of any metric space 
since the separable metric space $f(A)$ may be embedded isometrically into $\ell^{\infty}$ via the Kuratowski embedding.
\end{remark}
\begin{remark}
\label{bluerem}
In the following proof, we will see in particular that, for $j\in \{0,1,2,\ldots,n-1\}$, the set of points $x \in A$ where $\Theta^{*n}(f,x)>0$, $\ap Df(x)$ exists, and $\rank (\ap Df(x))=j$ must have measure zero.
\end{remark}
\begin{proof}
Suppose to the contrary that $\Theta^{*n}(f,x)>0$ on a set of positive measure and 
$\rank (\ap Df(x))<n$ almost everywhere in $A$. Then there is $j\in \{0,1,2,\ldots,n-1\}$ and
a set $F\subset A$ with $\H^k(F)>0$ such that $\Theta^{*n}(f,x)>0$ for all $x\in F$,
$\ap Df(x)$ exists and $\rank (\ap Df(x))=j$ for all $x\in F$.

According to Lemma~\ref{3.1}, there is a permutation $\Psi:\ell^\infty\to\ell^\infty$ of a finite
number of variables, an open set $U\subset\bbbr^k$, a set $K\subset F\cap U$ with $\H^k(K)>0$ and a 
bi-Lipschitz
$C^1$-diffeomorphism $G:U\to G(U)\subset\bbbr^k$ such that
$\hat{f}=\Psi\circ f\circ G^{-1}$ defined on $\hat{A}=G(A \cap U)$ satisfies
\begin{equation}
\label{Natalia}
\hat{f}_i(x)=x_i
\quad
\text{for $i=1,2,\ldots,j$ and $x\in \hat{K}$}
\end{equation}
where $\hat{K} = G(K)$.
Note that $\ap D\hat{f}(x)$ exists and
$\rank (\ap D\hat{f})(x)=j$ for all $x\in \hat{K}$, because composition with a diffeomorphism and a permutation $\Psi$ preserve approximate differentiability and the rank of the approximate derivative.

Assume that $x_0$ is a density point of $K$.
Since $\Theta^{*n}(f,x)>0$ for all $x\in K$, in order to arrive to a contradiction, it suffices to show that
$$
\Theta^{*n}(f,x_0)=0.
$$
Note that $y_0=G(x_0)$ is a density point of $\hat{K}=G(K)$ because diffeomorphisms map density points
to density points.

The next lemma shows that it suffices to prove that
\begin{equation}
\label{3.3}
\Theta^{*n}(\hat{f},y_0)= 
\limsup_{d\to 0}
\frac{\H^n_\infty(\hat{f}(B(y_0,d)\cap \hat{A}))}{\omega_n d^n}=0.
\end{equation}
\begin{lemma}
If $\Theta^{*n}(\hat{f},y_0)=0$, then $\Theta^{*n}(f,x_0)=0$.
\end{lemma}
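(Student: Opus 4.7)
The plan is to exploit the precise nature of the transformation taking $f$ to $\hat f$: the map $\hat f$ is obtained from $f$ by precomposing with the inverse of a bi-Lipschitz $C^1$-diffeomorphism $G$ and postcomposing with $\Psi$, which is an isometry of $\ell^\infty$. Both operations interact well with the Hausdorff content, so the positivity of $\Theta^{*n}(f,x_0)$ should transfer to positivity of $\Theta^{*n}(\hat f,y_0)$ (the contrapositive being what we want).

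Concretely, let $\Lambda\ge 1$ denote a bi-Lipschitz constant of $G$ on $U$, and pick $r_0>0$ small enough that $B(x_0,r_0)\subset U$. For any $r\in(0,r_0]$, I would argue in the following three steps. First, since $\Psi$ is an isometry of $\ell^\infty$ and the Hausdorff content is defined purely in terms of diameters, one has
\[
\H^n_\infty\bigl(f(B(x_0,r)\cap A)\bigr)=\H^n_\infty\bigl(\Psi\circ f(B(x_0,r)\cap A)\bigr).
\]
Second, using $\Psi\circ f=\hat f\circ G$ on $A\cap U$ together with the inclusion $G(B(x_0,r)\cap A)\subset B(y_0,\Lambda r)\cap \hat A$ (which follows from $y_0=G(x_0)$ and the Lipschitz constant of $G$), I would conclude
\[
\H^n_\infty\bigl(\Psi\circ f(B(x_0,r)\cap A)\bigr)=\H^n_\infty\bigl(\hat f(G(B(x_0,r)\cap A))\bigr)\leq \H^n_\infty\bigl(\hat f(B(y_0,\Lambda r)\cap\hat A)\bigr).
\]
Third, dividing by $\omega_n r^n$ and inserting $\Lambda^n/\Lambda^n$ yields
\[
\frac{\H^n_\infty\bigl(f(B(x_0,r)\cap A)\bigr)}{\omega_n r^n}\leq \Lambda^n\cdot\frac{\H^n_\infty\bigl(\hat f(B(y_0,\Lambda r)\cap\hat A)\bigr)}{\omega_n (\Lambda r)^n}.
\]
Taking $\limsup$ as $r\to 0^+$ on both sides, the right-hand side tends to $\Lambda^n\,\Theta^{*n}(\hat f,y_0)=0$ by hypothesis, so $\Theta^{*n}(f,x_0)=0$, as required.

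There is no real obstacle here; the only point of care is to make sure $r$ is small enough that $B(x_0,r)\subset U$ so that the identity $\Psi\circ f=\hat f\circ G$ applies on the entire set $B(x_0,r)\cap A$. Once that is arranged, the monotonicity and (Lipschitz) scaling properties of $\H^n_\infty$, together with the fact that an isometry preserves diameters, do all the work.
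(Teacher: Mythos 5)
Your proposal is correct and follows essentially the same approach as the paper's proof: both exploit that $\Psi$ is an isometry (so preserves Hausdorff content) and that $G$ is bi-Lipschitz (so transforms balls into sets sandwiched between balls of comparable radius), yielding $\Theta^{*n}(f,x_0)\leq\Lambda^n\,\Theta^{*n}(\hat f,y_0)$. The only difference is cosmetic: the paper starts from a ball $B(y_0,d)$ and pulls it back via $G^{-1}$ to contain $B(x_0,d/\Lambda)$, whereas you push $B(x_0,r)$ forward via $G$ into $B(y_0,\Lambda r)$; these are the same argument read in opposite directions.
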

\begin{proof}
Let $d>0$ be so small that $B(y_0,d)\subset G(U)$. Since the diffeomorphism $G^{-1}$ is bi-Lipschitz on $G(U)$, there is a constant $\Lambda>0$ such that
$$
B\left(x_0,\frac{d}{\Lambda}\right)\subset G^{-1}(B(y_0,d)).
$$
Since the permutation of coordinates $\Psi:\ell^\infty\to\ell^\infty$ is an isometry, it follows that
$\H^n_\infty(\hat{f}(E))=\H^n_\infty(f(G^{-1}(E)))$ for any set $E$ in the domain of $\hat{f}$. Therefore
$$
\H^n_\infty(\hat{f}(B(y_0,d)\cap \hat{A}))=
\H^n_\infty(f(G^{-1}(B(y_0,d)\cap \hat{A}))\geq
\H^n_\infty(f(B(x_0,d/\Lambda)\cap A)),
$$
so
\begin{equation*}
\begin{split}
\Theta^{*n}(\hat{f},y_0)
&=
\limsup_{d\to 0}
\frac{\H^n_\infty(\hat{f}(B(y_0,d)\cap \hat{A}))}{\omega_n d^n}\\
&\geq
\Lambda^{-n}\limsup_{d\to 0}\frac{\H^n_\infty(f(B(x_0,d/\Lambda)\cap A))}{\omega_n (d/\Lambda)^n}
=
\Lambda^{-n}\Theta^{*n}(f,x_0)
\end{split}    
\end{equation*}
and the lemma follows.
\end{proof}
To conclude the proof of \eqref{3.3}, we will apply the following lemma.
\begin{lemma}
\label{HaMaLemma}
Assume $d > 0$ is such that $Q(y_0,d) \subset G(U)$ and
$$
\H^{k}(Q(y_0,d) \setminus \hat{K}) < \left( \frac{d}{M} \right)^{k}
$$
for some positive integer $M$.
Then $\hat{f}(Q(y_0,d) \cap \hat{A})$ can be covered by $M^j$ balls
of radius $CLdM^{-1}$ for some constant $C = C(k,n) > 0$, where $L$ is the Lipschitz constant of $\hat{f}$.
In particular, we have
$$
\H_{\infty}^n(\hat{f}(Q(y_0,d) \cap \hat{A})) \leq \omega_n(CLd)^n M^{j-n}.
$$
\end{lemma}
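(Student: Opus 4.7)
The plan is to partition $Q(y_0,d)$ into $M^j$ rectangular ``columns'' aligned with the first $j$ coordinate directions, and to show each column's image under $\hat{f}$ lies in a ball of radius $C(k,n)Ld/M$. Writing $Q(y_0,d)=Q_1\times Q_2$ with $Q_1\subset\bbbr^j$ and $Q_2\subset\bbbr^{k-j}$ cubes of side $d$, I would subdivide $Q_1$ into $M^j$ subcubes $Q_\alpha$ of side $d/M$ and set $R_\alpha=Q_\alpha\times Q_2$. Plugging the $M^j$ enclosing balls (each of diameter $\leq 2C(k,n)Ld/M$) into the definition of $\H^n_\infty$ then yields the quantitative estimate.

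The key structural observation comes from combining Lemma~\ref{3.1} with the rank assumption inherited from the setup of Lemma~\ref{elllem}: at density points of $\hat{K}$, the first $j$ rows of $\ap D\hat{f}$ equal $e_1^T,\ldots,e_j^T$ (by differentiating $\hat{f}_i(x)=x_i$), and since $\rank\ap D\hat{f}=j$ on $\hat{K}$, every remaining row is a linear combination of these and so has zero entries in columns $j+1,\ldots,k$. Hence $\partial\hat{f}_i/\partial x_\ell=0$ on $\hat{K}$ for every component $i$ and every $\ell>j$.

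Fix a column $R_\alpha$, and for $y\in Q_\alpha$ set $V_y=\{y\}\times Q_2$. Fubini gives
$$
\int_{Q_\alpha}\H^{k-j}(V_y\setminus\hat{K})\,d\H^j(y)=\H^k(R_\alpha\setminus\hat{K})<(d/M)^k,
$$
so Chebyshev produces a $y^*\in Q_\alpha$ with $\H^{k-j}(V_{y^*}\setminus\hat{K})<2(d/M)^{k-j}$; by standard Lebesgue/Fubini density arguments I can further require that $\H^{k-j}$-almost every point of $V_{y^*}\cap\hat{K}$ be simultaneously a density point of $\hat{K}$ in $\bbbr^k$ and a density point of $V_{y^*}\cap\hat{K}$ within $V_{y^*}$. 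Extend $\hat{f}$ to an $L$-Lipschitz $\tilde{f}:\bbbr^k\to\ell^\infty$ via Lemma~\ref{2.7} and apply Lemma~\ref{5790} to $\tilde{f}|_{V_{y^*}}$ on the $(k-j)$-dimensional cube $V_{y^*}$: at any such density point $x\in V_{y^*}\cap\hat{K}$, the slicewise approximate derivative $D(\tilde{f}|_{V_{y^*}})(x)$ picks out the last-$(k-j)$-column block of $\ap D\hat{f}(x)$, which vanishes by the structural observation. Thus $V_{y^*}\cap\hat{K}$ lies in the zero set of $D(\tilde{f}|_{V_{y^*}})$ up to a null set, and Lemma~\ref{5790} yields
$$
\diam\tilde{f}(V_{y^*})\leq C(k-j)L\bigl(2(d/M)^{k-j}\bigr)^{1/(k-j)}\leq C(k)Ld/M.
$$

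Finally, for any $x'=(y',z')\in R_\alpha\cap\hat{A}$ I set $x^*=(y^*,z')\in V_{y^*}$: since $|\hat{f}(x')-\tilde{f}(x^*)|\leq L|y'-y^*|\leq L\sqrt{j}\,d/M$ and $\tilde{f}(x^*)\in\tilde{f}(V_{y^*})$, the image $\hat{f}(R_\alpha\cap\hat{A})$ fits inside a ball of radius $C(k,n)Ld/M$. Taking one such ball per column gives the claimed cover by $M^j$ balls. The main obstacle is the slice-derivative identification — one must justify that $D(\tilde{f}|_{V_{y^*}})$ really vanishes at the chosen points — which requires carefully transferring the density of $\hat{K}$ in $\bbbr^k$ to density within the $(k-j)$-dimensional slice and verifying that the slicewise approximate derivative agrees with the corresponding block of $\ap D\hat{f}$.
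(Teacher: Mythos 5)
Your proposal is correct and follows essentially the same route as the paper: subdivide $Q(y_0,d)$ into $M^j$ columns, use Fubini to pick a good $(k-j)$-dimensional slice in each column, observe that $\rank \ap D\hat f = j$ together with $\hat f_i(x)=x_i$ ($i\le j$) on $\hat K$ forces the last $k-j$ columns of the derivative to vanish there, apply Lemma~\ref{5790} on the slice, and finish with the Lipschitz bound on the column width. The only cosmetic difference is that the paper works with the full derivative $D\tilde f$ of the Lipschitz extension (which exists $\H^k$-a.e.\ by Rademacher and equals $\ap D\hat f$ at density points of $\hat K$), so the ``slice-derivative identification'' you flag is absorbed into a single Fubini step rather than requiring the separate density-within-slice argument you sketch.
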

Before proving this lemma, 
we will see how it can be used to prove 
\eqref{3.3}.
Let $\eps > 0$. Fix a positive integer $M$
such that $(CL)^n M^{j-n} < \eps$.
(This is possible since $j-n < 0$.)
Since $y_0$ is a density point of $\hat{K}$, 
there is $\delta>0$ such that for $0<d<\delta$,
$Q(y_0,d) \subset G(U)$ 
satisfies
$$
\H^{k}(Q(y_0,d) \setminus \hat{K}) < \frac{\H^{k}(Q(y_0,d))}{M^k} = \left( \frac{d}{M} \right)^{k}.
$$
Hence, by Lemma~\ref{HaMaLemma}, we have
$$
\frac{\H_{\infty}^n(\hat{f}(Q(y_0,d) \cap \hat{A}))}{\omega_n d^n} \leq (CL)^n M^{j-n} < \eps
\quad
\text{for $0<d<\delta$}
$$
which, along with \eqref{1410}, implies that $\Theta^{*n}(\hat{f},y_0)=0$.
That completes the proof of \eqref{3.3} once Lemma~\ref{HaMaLemma} has been verified.
The proof of Lemma~\ref{HaMaLemma} is nearly identical to the proof of \cite[Lemma~2.7]{HajMal},
but we will include it here for completeness.

\begin{proof}[Proof of Lemma~\ref{HaMaLemma}]
Assume that a positive integer $M>0$ and $d > 0$ satisfy $Q(y_0,d) \subset G(U)$ and
$$
\H^{k}(Q(y_0,d) \setminus \hat{K}) < \left( \frac{d}{M} \right)^{k}.
$$
Since the result is translation invariant, we may assume without loss of generality that 
$$
Q(y_0,d)=Q = [0,d]^j \times [0,d]^{k-j}.
$$
According to Lemma~\ref{2.7}, the $L$-Lipschitz mapping $\hat{f}:Q \cap \hat{A} \to \ell^{\infty}$ 
admits an $L$-Lipschitz extension $\tilde{f}:Q \to \ell^{\infty}$.
According to Rademacher's theorem, $\tilde{f}$ is component-wise differentiable for almost all points in $Q$.

Divide $[0,d]^j$ into $M^j$ cubes $\{Q_{\nu}\}_{\nu = 1}^{M^j}$ with pairwise disjoint interiors each of edge length $d/M$.
It suffices to show that each set
$$
\hat{f}((Q_{\nu} \times [0,d]^{k-j}) \cap \hat{A}) 
\subset 
\tilde{f}(Q_{\nu} \times [0,d]^{k-j})
$$
is contained in an $\ell^{\infty}$-ball
of radius $CLdM^{-1}$ for some constant $C=C(k,n)>0$.
By our assumptions, for each $\nu$ we have
$$
\mathcal{H}^k((Q_{\nu} \times [0,d]^{k-j}) \setminus \hat{K})
\leq 
\mathcal{H}^k(Q \setminus \hat{K})
<
\left( \frac{d}{M} \right)^{k}.
$$
Hence
$$
\mathcal{H}^k((Q_{\nu} \times [0,d]^{k-j}) \cap \hat{K}) > \left( M^{-j} - M^{-k} \right)d^k.
$$
According to Fubini's Theorem,
we may therefore choose some $\rho \in Q_{\nu}$ such that
$$
\mathcal{H}^{k-j}((\{\rho\} \times [0,d]^{k-j}) \cap \hat{K}) > \left( 1 - M^{j-k} \right)d^{k-j}
$$
and $\tilde{f}$ is component-wise
differentiable at almost all points of $\{\rho\} \times [0,d]^{k-j}$. 
Hence
\begin{equation}
\label{2.1}
\mathcal{H}^{k-j}((\{\rho\} \times [0,d]^{k-j}) \setminus \hat{K}) < \left( \frac{d}{M} \right)^{k-j}.
\end{equation}
According to \eqref{Natalia}, $\hat{f}$ fixes the first $j$ coordinates in $\hat{K}$. 
Since $\hat{f}=\tilde{f}$ in $\hat{K}$ and
$\rank (\ap D\hat{f}(x))=j$ everywhere in  $\hat{K}$, it follows that 
$\tilde{f}_i(x)=x_i$ for $i=1,2,\ldots,j$ and $x\in \hat{K}$ and 
$\rank D\tilde{f}(x)=j$ almost everywhere in $\hat{K}$.
Therefore, the component-wise
derivative of $\tilde{f}$ along $\{\rho\}\times [0,d]^{k-j}$ vanishes at almost all points in 
$(\{\rho\}\times [0,d]^{k-j})\cap \hat{K}$. 
That is
$$
D\big(\tilde{f}\big|_{\{\rho\}\times [0,d]^{k-j}}\big)=0
\quad
\text{a.e. in $(\{\rho\}\times [0,d]^{k-j})\cap \hat{K}$.}
$$
Therefore Lemma~\ref{5790} applied to $\tilde{f}:\{\rho\} \times [0,d]^{k-j}\to\ell^\infty$ (with $k$ replaced by $k-j$)
together with \eqref{2.1} yield
$$
\diam(\tilde{f}(\{\rho\} \times [0,d]^{k-j}))
\leq
CL\mathcal{H}^{k-j}((\{\rho\} \times [0,d]^{k-j}) \setminus \hat{K})^{1/(k-j)}
\leq 
CLdM^{-1}.
$$
Since the distance from any point in $Q_{\nu} \times [0,d]^{k-j}$ to the set
$\{\rho\} \times [0,d]^{k-j}$ is at most $\diam(Q_\nu ) = \sqrt{j} dM^{-1}$
and $\tilde{f}$ is $L$-Lipschitz,
this implies that 
$$
\diam(\tilde{f}(Q_\nu \times [0,d]^{k-j})) 
\leq 
CLdM^{-1}
$$
(for a larger value of $C$).
This proves Lemma~\ref{HaMaLemma}.
\end{proof}
This also completes the proof of \eqref{3.3} and hence that of Lemma~\ref{elllem}.
\end{proof}

We now can finish the proof of Theorem~\ref{HardSard2}.

\begin{proof}[Proof of Theorem~\ref{HardSard2}]

Since $f(A)\subset X$ is a separable metric space, there is an isometric embedding
$\kappa:f(A)\to\ell^\infty$ (see Lemma~\ref{kura}).
The mapping $\kappa$ is $1$-Lipschitz. According to Lemma~\ref{2.7}, the map $\kappa$ admits a $1$-Lipschitz extension $\K:X\to\ell^\infty$.

Then $\bar{f}=\K\circ f=\kappa\circ f:A\to\ell^\infty$ is Lipchitz and
$\Theta^{*n}(\bar{f},x) > 0$ on a subset of $A$ with positive measure
(composition with an isometric map does not change the upper density).

It follows from Lemma~\ref{elllem} (with $k=n+m$) that $\rank \ap D\bar{f} \geq n$ on a set of positive measure. Therefore, according to Lemma~\ref{3.1}, there is an open set $U\subset\bbbr^{n+m}$,
a subset $K\subset A\cap U$ with $\H^{n+m}(K)>0$,
a bi-Lipschitz $C^1$-diffeomorphism $G:U\to G(U)\subset\bbbr^{n+m}$ and a permutation of finitely many coordinates
$\Psi:\ell^\infty\to\ell^\infty$ such that
\begin{equation}
\label{Michal}
(\Psi\circ \bar{f}\circ G^{-1})_i(x)=x_i
\quad\text{for $i=1,2,\ldots,n$ and $x\in G(K)$.}
\end{equation}
Let
$$
P:\ell^\infty\to\bbbr^n,
\quad
P(x_1,x_2,\ldots)=(x_1,x_2,\ldots,x_n)
$$
be the projection onto the first $n$ coordinates. Then $P$ is $1$-Lipschitz as a mapping to $\bbbr^n$ equipped with the $\ell^\infty_n$ norm, $\Vert (x_1,\ldots,x_n)\Vert_\infty=\max_i|x_i|$ and $\sqrt{n}$-Lipschitz as a mapping to $\bbbr^n$ with the Euclidean metric.
Therefore, it follows that the mapping 
$$
\pi:X\to\bbbr^n,
\quad
\pi=P\circ\Psi\circ\K
$$
is  $1$-Lipschitz as a mapping to $\bbbr^n$ equipped with the norm $\ell^\infty_n$ and $\sqrt{n}$-Lipschitz as a mapping to $\bbbr^n$ with the Euclidean metric (see Remark~\ref{R145}).

If we swith to notation
$$
(x,y)=(x_1,\ldots,x_n,y_1,\ldots,y_m):=
(x_1,\ldots,x_n,x_{n+1},\ldots,x_{n+m}),
$$
then clearly, \eqref{Michal} means that $(\pi\circ f\circ G^{-1})(x,y)=x$ for $(x,y)\in G(K)$ which 
completes the proof of the statement (B).

To prove (A), suppose to the contrary that $\H^n(f(A))=0$. Then $\H^n(f(K))=0$ and hence
\begin{equation}
\label{Joasia}   
\H^n((\pi\circ f\circ G^{-1})(G(K))=
\H^n(\pi(f(K)))\leq(\sqrt{n})^n\H^n(f(K))=0,
\end{equation}
because the $\sqrt{n}$-Lipschitz map $\pi$ can increase the $\H^n$-measure no more
than by a factor $(\sqrt{n})^n$.
On the other hand, $G(K)$ has positive $\H^{n+m}$-measure so it follows from Fubini's theorem that
its projection $(\pi\circ f\circ G^{-1})(G(K))$ onto the first $n$-coordinates has positive 
$\H^n$-measure which contradicts \eqref{Joasia}.

Parts (C) and (D) are easy consequences of part (B) as follows.
Write $F = f \circ G^{-1}$.

Let $(x',y')\in F^{-1}(F(x,y))\cap G(K)$. Then $F(x',y')=F(x,y)$ so
$x'=\pi(F(x',y'))=\pi(F(x,y))=x$ and hence $(x',y')=(x,y')\in \{ x\}\times\bbbr^m$ which proves (C).

To prove (D), fix $y\in\bbbr^m$ and let $(x_1,y),(x_2,y)\in G(K)$.
Let $\Lambda$ be the Lipschitz constant of $F$ on $G(K)$. Since
$\pi:X\to(\bbbr^n,\ell^\infty_n)$ is $1$-Lipschitz we have
\begin{equation*}
\begin{split}
n^{-1/2}|x_1-x_2|&\leq \Vert x_1-x_2\Vert_\infty=
\Vert\pi(F(x_1,y))-\pi(F(x_2,y))\Vert_\infty \\
&\leq d(F(x_1,y),F(x_2,y))\leq\Lambda|x_1-x_2|        
\end{split}
\end{equation*}
which proves (D) along with the estimate \eqref{404}.
The proof is complete.
\end{proof}

\section{Proof of Corollary~\ref{HSC}}
\label{HSCProof}
Since $\Theta^{*n}(f,x)>0$ almost everywhere in $A$, we can exhaust $A$ up to a set of $\H^{n+m}$ measure zero by a countable family of pairwise disjoint sets of positive $\H^{n+m}$ measure $\{K_i\}$, where each of the sets $K=K_i$ satisfies claim (B) of Theorem~\ref{HardSard2}.
Say $\{ G_i \}$ are the associated bi-Lipschitz $C^1$-diffeomorphisms.

Let $W=\bigcup_{i=1}^\infty K_i$ and $Z=A\setminus W$ so $\H^{n+m}(Z)=0$.
Let $f_i=f|_{K_i}$ and let $F_i=f_i\circ G_i^{-1}$. Since $F_i$ is defined on $G_i(K_i)$ only, we have from part (C) of Theorem~\ref{HardSard2} that for {\em any} $z\in X$, $F_i^{-1}(z)$ is contained in an $m$-dimensional affine subspace of $\bbbr^{n+m}$ and hence $f_i^{-1}(z)=G_i^{-1}(F_i^{-1}(z))$ is contained in an $m$-dimensional submanifold (of class $C^1$). Therefore, for any $z\in X$,
$$
f^{-1}(z)\cap W=\bigcup_{i=1}^\infty f_i^{-1}(z)
$$
is countably $\H^m$-rectifiable as it is contained in a countable union of $m$-manifolds, and it remains to observe from Corollary~\ref{zero} that $\H^m (f^{-1}(z)\setminus W) = \H^m (f^{-1}(z)\cap Z)=0$
for $\H^n$ almost all $z\in X$.
\hfill $\Box$

\section{Comparing Theorems \ref{HardSard2} and \ref{AS}}
\label{SecComp}

Recall the $(n,m)$-Hausdorff content which was defined in \eqref{AS12}.
As mentioned in the introduction,
the assumption
that $\Theta^{*n}(f,x)>0$ on a set of positive measure
in Theorem~\ref{HardSard2}
is weaker than the assumption of positive $(n,m)$-Hausdorff content of a cube in Theorem~\ref{AS}.
We see this fact in the following proposition,
the proof of which follows easily from the Vitali Covering Theorem.

\begin{proposition}
\label{elllem2}
Suppose $Q \subset \mathbb{R}^{n+m}$ is a cube,
$X$ is a metric space,
and $f:Q \to X$ is Lipschitz.
Then
$$
\mathcal{H}_{\infty}^{n,m}(f,Q) \leq 
\frac{\omega_n}{2^n}(n+m)^{n/2}\int_Q \Theta_*^n(f,x) \, dx
\leq \frac{\omega_n}{2^n}(n+m)^{n/2}\int_Q \Theta^{*n}(f,x) \, dx.
$$
\end{proposition}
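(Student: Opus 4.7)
The second inequality is immediate from $\Theta_*^n\leq\Theta^{*n}$, so the plan focuses on the first. The approach is a Vitali covering argument that combines the liminf definition of $\Theta_*^n$ with the Lebesgue differentiation theorem applied to $\Theta_*^n(f,\cdot)$ itself.

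Since $f$ is $L$-Lipschitz, the function $\Theta_*^n(f,\cdot)$ is bounded by $L^n$ on $Q$; I would take its Lebesgue measurability for granted (if one is worried, one can work with an upper integral or with a Borel majorant, and the argument is unchanged). Fix $\eps>0$. The first key step is to check that for almost every $x\in\inter(Q)$ there are arbitrarily small $d>0$ satisfying simultaneously (i) $Q(x,d)\subset Q$, (ii) $\H_\infty^n(f(Q(x,d)\cap Q))\leq(\Theta_*^n(f,x)+\eps)\frac{\omega_n}{2^n}(n+m)^{n/2}d^n$, and (iii) $\Theta_*^n(f,x)\leq\frac{1}{|Q(x,d)|}\int_{Q(x,d)}\Theta_*^n(f,y)\,dy+\eps$. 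Condition (i) holds for all small $d$; condition (iii) holds for all $d<\delta(x)$ at every Lebesgue point of $\Theta_*^n(f,\cdot)$; and condition (ii) is forced for arbitrarily small $d$ by the inclusion $Q(x,d)\subset B(x,d\sqrt{n+m}/2)$ combined with the liminf definition of $\Theta_*^n$, since $\omega_n(d\sqrt{n+m}/2)^n=\frac{\omega_n}{2^n}(n+m)^{n/2}d^n$. An ``arbitrarily small'' condition intersected with ``for all small'' conditions still yields arbitrarily small admissible $d$.

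The second step is to feed the resulting Vitali cover of a full-measure subset of $Q$ into Vitali's theorem for cubes, extracting a countable disjoint subfamily $\{Q_j=Q(x_j,d_j)\}$ that covers $Q$ up to a null set and whose members all satisfy (i)--(iii). After replacing each $Q_j$ by its interior (which affects nothing), plugging into \eqref{AS12}, and using $d_j^{n+m}=|Q_j|$, the computation is straightforward:
\begin{align*}
\H_\infty^{n,m}(f,Q)
&\leq\sum_j\H_\infty^n(f(Q_j))d_j^m
\leq\frac{\omega_n}{2^n}(n+m)^{n/2}\sum_j(\Theta_*^n(f,x_j)+\eps)|Q_j|\\
&\leq\frac{\omega_n}{2^n}(n+m)^{n/2}\sum_j\Big(\int_{Q_j}\Theta_*^n(f,y)\,dy+2\eps|Q_j|\Big)\\
&\leq\frac{\omega_n}{2^n}(n+m)^{n/2}\Big(\int_Q\Theta_*^n(f,y)\,dy+2\eps|Q|\Big).
\end{align*}
Letting $\eps\to0$ finishes the proof.

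The main obstacle is really just the bookkeeping in the Vitali step: verifying that the family of cubes meeting all three conditions remains a Vitali cover (rather than losing sight of a null set of trouble points), and disposing cleanly of the measurability of $\Theta_*^n(f,\cdot)$. Both are routine once the central idea---pairing the liminf in the definition of $\Theta_*^n$ with the Lebesgue density of the integrand at the same point---is in place.
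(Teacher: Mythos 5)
Your proposal follows essentially the same route as the paper: pair the $\liminf$ in the definition of $\Theta_*^n$ with the Lebesgue-point condition at the same center to produce a fine Vitali covering by cubes each of whose image content is controlled by the local average of $\Theta_*^n$, extract a disjoint subfamily covering $Q$ up to a null set, and feed it into the definition of $\H_\infty^{n,m}$. The only cosmetic difference is bookkeeping with $\eps$ versus $\eps/2$; the inclusion $Q(x,d)\subset B(x,d\sqrt{n+m}/2)$ and the resulting constant are identical.
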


\begin{proof}
In this proof $Q(x,d)$ and $\overbar{Q}(x,d)$ will denote open and closed cubes in $\bbbr^{n+m}$ respectively.
Note that $Q(x,d)\subset B(x,\lambda d)$, where $\lambda=\frac{\sqrt{n+m}}{2}$.

The function $\Theta_*^n(f,\cdot)$ is integrable on $Q$ since it is bounded.
Fix $\eps > 0$.
Denote by $A$ the set of all points in the interior of $Q$ 
which are Lebesgue points of the function $\Theta_*^{n}(f,\cdot)$.
Fix a point $x \in A$,
and choose $d_x>0$ small enough so that $Q(x,d_x)\subset B(x,\lambda d_x) \subset Q$.
Choose a sequence $\{ d_x^i \}_{i=1}^{\infty}$ with $d_x > d_x^i \searrow 0$
satisfying the following for each $d = d_x^i$:
$$
\Theta_*^{n}(f,x) \leq \frac{1}{|Q(x,d)|} \int_{Q(x,d)} \Theta_*^{n}(f,y) \, dy + 
\frac{\eps}{2}
$$
and
$$
\frac{\mathcal{H}_{\infty}^n(f(Q(x,d)))}{\omega_n (\lambda d)^n} \leq 
\frac{\mathcal{H}_{\infty}^n(f(B(x,\lambda d)))}{\omega_n (\lambda d)^n}\leq
\Theta_*^n(f,x) + \frac{\eps}{2}.
$$
Both inequalities imply that
$$
\H^n_\infty(f(Q(x,d)))d^m\leq
\omega_n\lambda^n\left(\int_{Q(x,d)}\Theta_*^n(f,y)\, dy+\eps d^{n+m}\right)
\quad
\text{for all $x\in A$ and all $d=d_x^i$.}
$$
The collection of closed cubes  
$$
\mathcal{Q}=\{ \overbar{Q}(x,d_x^i):\, x\in A,\ i\in\bbbn\}
$$ 
is a fine Vitali covering of $A$.
Thus there is a countable, pairwise disjoint collection of cubes $\{ \overbar{Q}(x_j,d_j) \}$ in $\mathcal{Q}$
so that
$$
\H^{n+m}\left(Q\setminus\bigcup_{j}Q(x_j,d_j)\right)=
\H^{n+m}\left(A \setminus \bigcup_j \overbar{Q}(x_j,d_j) \right) = 0.
$$
Since the cubes $Q(x_j,d_j)$ are open, pairwise disjoint, contained in $Q$, and they cover $Q$ up to a set of measure zero, the definition of $\H^{n,m}_\infty(f,Q)$ yields
\begin{align*}
\H_{\infty}^{n,m}(f,Q) 
\leq \sum_j \H^n_{\infty}(f(Q(x_j,d_j)))d_j^m 
&\leq \omega_n\lambda^n\left(\sum_j\int_{Q(x_j,d_j)}\Theta_*^n(f,y)\, dy+\eps\sum_j d_j^{n+m}\right)\\
&= \omega_n\lambda^n\left(\int_Q \Theta_*^n(f,y) \, dy + \eps |Q|\right).
\end{align*}
Sending $\eps \to 0$ gives the desired result.
\end{proof}

The next result shows that $\Theta^n_*(f,x)$ is in fact equal to the Jacobian of $f$ when $f$ is a Lipschitz mapping to $\bbbr^n$. This result is related to Lemma~\ref{elllem}. 
Consider a mapping $f:\mathbb{R}^{n+m} \to \mathbb{R}^n$ which is differentiable at $x \in \bbbr^{n+m}$.
Define the Jacobian $|J^nf|(x)$ at $x$ as follows:
$$
|J^nf|(x) = \sqrt{\det (Df)(Df)^T(x)}.
$$
Geometrically, it follows that, 
when $\rank Df(x)=n$,
the Jacobian satisfies
\begin{equation}
\label{wrx}
|J^nf|(x) = \frac{\mathcal{H}^n (W_{x,r})}{\omega_n r^n}
\quad
\text{for any $r>0$,}
\end{equation}
where 
\begin{equation}
\label{wrx2}
W_{x,r} = f(x) + Df(x) (B(0,r))
\quad 
\text{for } 
B(0,r)\subset T_x\bbbr^{n+m}
\end{equation}
is the ellipsoid approximation (in $\bbbr^n$) of $f(B(x,r))$.
This Jacobian plays an important role in the so called co-area formula \cite[Theorem~2.7.3]{ziemer}.

Observe that if $\pi:T_x\bbbr^{n+m}\to (\ker Df(x))^\perp\subset T_x\bbbr^{n+m}$ is the orthogonal projection
onto the $n$-dimensional subspace $(\ker Df(x))^\perp$, then 
$W_{x,r}=f(x)+Df(x)(\pi(B(0,r))$, so $W_{x,r}$ is (up to a translation by the vector $f(x)$) the image of the $n$-dimensional ball $\pi(B(0,r))\subset (\ker Df(x))^\perp$ of radius $r$ under the linear map $Df(x)$. That is, $|J^nf|(x)$ is the ratio of the volume of the ellipsoid $W_{x,r}$ to the volume of $\pi(B(0,r))$.

If the rank of $Df(x)$ is less than $n$, we have $|J^nf|(x)=0$. Therefore
$|J^n f|(x)>0$ if and only if $\rank Df(x)=n$.
We similarly define the Jacobian of any Lipschitz mapping $f:\bbbr^{n+m} \supset A \to\bbbr^n$ using the approximate derivative.

\begin{proposition}
\label{euclem}
Let $f:A\to\bbbr^n$ be a Lipschitz map defined on a measurable set $A\subset\bbbr^{n+m}$. Then
\begin{equation}
\label{TTJ}
\Theta_*^n(f,x)=\Theta^{*n}(f,x) =|J^nf|(x)
\end{equation}
for almost every $x \in A$.
\end{proposition}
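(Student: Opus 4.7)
The plan is to prove both inequalities $\Theta^{*n}(f,x)\leq |J^nf|(x)$ and $\Theta_*^n(f,x)\geq |J^nf|(x)$ for almost every $x\in A$, which together with the trivial $\Theta_*^n\leq \Theta^{*n}$ will force equality in \eqref{TTJ}. Throughout, Lemma~\ref{h=h} lets me identify $\H^n_\infty$ with $\H^n$ on images in $\bbbr^n$. The setup is standard: extend $f$ coordinate-wise via Lemma~\ref{2.7} to a Lipschitz map $\tilde f:\bbbr^{n+m}\to\bbbr^n$, apply Rademacher's theorem, and work at a.e.\ $x\in A$ that is simultaneously a density point of $A$ and a differentiability point of $\tilde f$. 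At such $x$, $\ap Df(x)=D\tilde f(x)$ and hence $|J^nf|(x)=|J^n\tilde f|(x)$.

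For the upper bound, differentiability of $\tilde f$ at $x$ yields $f(B(x,r)\cap A)\subseteq W_{x,r}+B(0,\varepsilon_r r)$ with $\varepsilon_r\to 0$. If $\rank Df(x)=n$, then scaling invariance and the continuity of Lebesgue measure under Minkowski dilation of the full-dimensional ellipsoid $Df(x)(B(0,1))$ force $\H^n(W_{x,r}+B(0,\varepsilon_r r))/(\omega_n r^n)\to |J^nf|(x)$. If instead the rank is $j<n$, then $W_{x,r}$ lies in a $j$-dimensional affine subspace, so its $\varepsilon_r r$-thickening has $\H^n$-measure bounded by $C(Lr)^j(\varepsilon_r r)^{n-j}=o(r^n)$, matching $|J^nf|(x)=0$.

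For the lower bound it suffices to treat points with $|J^nf|(x)>0$. Applying Federer's lemma (Lemma~\ref{federer}) coordinate-wise produces, for each $k\in\bbbn$, a map $g_k\in C^1(\bbbr^{n+m},\bbbr^n)$ with $|A\cap\{f\ne g_k\}|<1/k$. A.e.\ point of $A$ is then a density point of some $E_k:=A\cap\{f=g_k\}$, and at such a density point $\ap Df(x)=Dg_k(x)$, so $|J^ng_k|(x)=|J^nf|(x)$. Fix such an $x$ with $|J^nf|(x)>0$ and write $g=g_k$, $E=E_k$. By continuity of $Dg$, $g$ is a $C^1$ submersion of rank $n$ in a neighborhood of $x$, so its fibers there are $C^1$ $m$-submanifolds of $\bbbr^{n+m}$.

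The geometric core is a coarea bound on the ``lost'' part of the image. For $c\in(0,1)$ set $B^*(c,r):=g(B(x,cr))\setminus g(B(x,r)\cap E)$. Any $y\in B^*(c,r)$ satisfies $g^{-1}(y)\cap B(x,r)\subseteq B(x,r)\setminus E$, and since some $z\in B(x,cr)$ lies on the $C^1$ $m$-manifold $g^{-1}(y)$, the standard manifold-meets-ball estimate yields $\H^m(g^{-1}(y)\cap B(x,r))\geq \omega_m(1-c)^m r^m(1+o(1))$ uniformly in $y$ as $r\to 0$. The coarea formula then gives
\begin{equation*}
|B^*(c,r)|\cdot \omega_m(1-c)^m r^m(1+o(1))\leq \int_{B(x,r)\setminus E}|J^ng|\,dz\leq L^n|B(x,r)\setminus E|=o(r^{n+m}),
\end{equation*}
where $L=\Lip g$ and the last equality uses density of $E$ at $x$. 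Hence $|B^*(c,r)|=o(r^n)$. Combining with the standard $C^1$ asymptotic $\H^n(g(B(x,cr)))\sim |J^nf|(x)\,\omega_n(cr)^n$ and the inclusion $f(B(x,r)\cap A)\supseteq g(B(x,r)\cap E)\supseteq g(B(x,cr))\setminus B^*(c,r)$,
\begin{equation*}
\liminf_{r\to 0}\frac{\H^n(f(B(x,r)\cap A))}{\omega_n r^n}\geq c^n|J^nf|(x),
\end{equation*}
and sending $c\to 1$ finishes the proof. I expect the main technical point to be the uniform fiber-in-ball lower bound: this is where the $C^1$ regularity of $g$ (not merely differentiability of $\tilde f$) becomes essential, so the reduction through Federer's lemma is not optional.
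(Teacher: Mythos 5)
Your proof is correct, and while the upper bound $\Theta^{*n}(f,x)\leq|J^nf|(x)$ follows the same ellipsoid-sandwich route as the paper, your lower bound argument is genuinely different. The paper stays entirely at the level of the differentiable (not $C^1$) extension $\tilde f$: it rotates to split $\bbbr^{n+m}=(\ker D\tilde f(x))^\perp\oplus\ker D\tilde f(x)$, builds a thin cylinder $V_{t,r}$ around the $n$-dimensional slice through $x$, uses Fubini to select a slice $B^n_b(r)$ on which $A$ occupies nearly full measure, shows via a retraction/degree argument that $\tilde f$ of that slice contains a shrunken ellipsoid $W_{0,(1-\cdots)R}$, and then subtracts off $\H^n(\tilde f(\text{bad part}))\leq L^n\cdot(\text{small measure})$. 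You instead pass through Federer's $C^1$ approximation and exploit the resulting submersion structure: the quantity you call $B^*(c,r)$ — the ``lost'' image — is controlled by the coarea formula applied to the small set $B(x,r)\setminus E$, where the essential geometric input is the uniform lower bound $\H^m(g^{-1}(y)\cap B(x,r))\gtrsim(1-c)^m r^m$ for fibers passing through $B(x,cr)$. Both approaches have a genuine geometric kernel that needs care: for the paper it is the left inclusion in their (4.8) (the retraction argument $W_{x,(1-\eps/\lambda_1)r}\subset \tilde f(B(x,r))$); for you it is the uniform fiber-in-ball estimate, which as you correctly note requires $C^1$ regularity rather than mere differentiability, so the $C^1$ reduction is indeed non-optional. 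Your route buys a cleaner, more conceptual argument and avoids the explicit cylinder/Fubini slicing, at the cost of invoking the coarea formula for $C^1$ submersions; the paper's route is more elementary and self-contained, avoiding coarea entirely in this proposition (it only cites coarea later, in Corollary~\ref{HSC}, and then in a genuinely metric form).
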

Note that
combining this result with Proposition~\ref{elllem2} gives the following
for any cube $Q \subset \mathbb{R}^{n+m}$ 
and any Lipschitz
$f:Q\to\bbbr^n$: 
\begin{equation}
\label{asasas}
\mathcal{H}_{\infty}^{n,m}(f,Q) \leq \frac{\omega_n}{2^n}(n+m)^{n/2} 
\int_{Q} |J^nf|(x) \, dx.
\end{equation}
This inequality is essentially Lemma 6.13 in \cite{AzzSch}.
\begin{proof}
Assume first that $f:\bbbr^{n+m}\to\bbbr^n$ is an $L$-Lipschitz mapping defined on all of $\bbbr^{n+m}$. It suffices to prove that \eqref{TTJ} holds true at all points of differentiability of $f$.

Let $x\in\bbbr^{n+m}$ be a point of differentiability of $f$.
Given $L>\eps>0$, there is $\delta>0$ such that
\begin{equation}
\label{eq4.3}
|f(y)-f(x)-Df(x)(y-x)|<\eps r
\quad
\text{for all $0<r<\delta$ and $y\in B(x,r)$.}
\end{equation}
Assume first that $|J^nf|(x)=0$. We will show that $\Theta_*^n(f,x)=\Theta^{*n}(f,x) =0$.

Let $W_x=f(x)+Df(x)(T_x\bbbr^{n+m})$ be an affine space through $f(x)$ (which is the image of the derivative in $\bbbr^n$).
Since $|J^nf|(x)=0$, we have that $\dim W_x\leq n-1$ and hence
\begin{equation}
\label{7654}
f(B(x,r))\subset B(f(x),Lr)\cap \{z\in\bbbr^n:\,\dist (z,W_x)<\eps r\}
\quad
\text{for $0<r<\delta$.}
\end{equation}
Since $\dim W_x=k\leq n-1$ we have that
$$
\H^n_\infty(f(B(x,r)))\leq C(n)\eps L^{n-1}r^{n}.
$$
Indeed, the $k$-dimensional affine ball $B(f(x),Lr)\cap W_x\subset\bbbr^n$ can be covered by
$$
C\left(\frac{Lr}{\eps r}\right)^k\leq C\left(\frac{L}{\eps}\right)^{n-1}
$$
balls in $\bbbr^n$ of radius $\eps r$ and centered at the points of 
$B(f(x),Lr)\cap W_x$. Then the balls with radii $2\eps r$ and the same centers cover the set on the right hand side of \eqref{7654}, and hence they also cover $f(B(x,r))$. Since a ball of radius $2\eps r$ has diameter $4\eps r$ we have that
$$
\H^n_\infty(f(B(x,r)))\leq \frac{\omega_n}{2^n}(4\eps r)^n C\left(\frac{L}{\eps}\right)^{n-1}
=
C(n) \omega_n \eps r^n L^{n-1}.
$$
Therefore,
$$
\frac{\H^n_\infty(f(B(x,r)))}{\omega_n r^n}\leq C\eps L^{n-1}
\quad\text{for $0<r<\delta$}
$$
which readily yields  $\Theta_*^n(f,x)=\Theta^{*n}(f,x) =0$.

Assume now that $|J^nf|(x)>0$. Let $W_{x,r}=f(x)+Df(x)(B(0,r))$ be the ellipsoid considered in \eqref{wrx2}. 
Let $0<\lambda_1\leq\lambda_2\leq\ldots\leq\lambda_n$ be the singular values of $Df(x)$ i.e.,
the lengths of the semiaxes of $W_{x,r}$ are $0<\lambda_1 r\leq\lambda_2 r\leq\ldots\leq\lambda_n r$.
($\lambda_1>0$ because $|J^nf|(x)>0$).

Consider the three concentric and homothetic ellipsoids
(we further assume $0<\eps<\lambda_1$ so $1-\eps/\lambda_1>0$)
$$
W_{x,(1-\eps/\lambda_1)r}\subset W_{x,r}\subset W_{x,(1+\eps/\lambda_1)r}.
$$
The distance between the boundary of the ellipsoid $W_{x,r}$ and the boundaries of each of the other two ellipsoids equals $\eps r$ since the distance between the homothetic ellipsoids is measured along the shortest semiaxes (as an easy exercise for the Lagrange multipliers). Therefore it follows from \eqref{eq4.3} that
\begin{equation}
\label{rudy102}
W_{x,(1-\eps/\lambda_1)r}\subset f(B(x,r))\subset W_{x,(1+\eps/\lambda_1)r}
\quad
\text{for $0<r<\delta$.}
\end{equation}
Indeed, the right inclusion follows immediately from \eqref{eq4.3}. The proof of the left inclusion is more intricate. Suppose to the contrary that  
$$
z\in W_{x,(1-\eps/\lambda_1)r}\setminus f(B(x,r)).
$$
Then using a `radial' projection from $z$ and estimate \eqref{eq4.3} one can construct a retraction of the ellipsoid $W_{x,r}$ to its boundary which is a contradiction. We leave details of a construction of a retraction to the reader.

It follows from Lemma~\ref{h=h} and \eqref{wrx} that for any $R>0$
$$
\H^n_\infty(W_{x,R})=\H^n(W_{x,R})=|J^n f|(x)\omega_n R^n
$$
so \eqref{rudy102} implies that for $0<r<\delta$ we have
$$
|J^nf|(x)\left(1-\frac{\eps}{\lambda_1}\right)^n\leq
\frac{\H^n_\infty(f(B(x,r)))}{\omega_n r^n}\leq
|J^nf|(x)\left(1+\frac{\eps}{\lambda_1}\right)^n
$$
and letting $\eps\to 0$ yields \eqref{TTJ}.

Note that the proof presented above is enough to establish \eqref{asasas}.

We can now proceed to the proof of the result in the general case when $f:\bbbr^{n+m} \supset A \to\bbbr^n$ is Lipschitz.

Let $\tilde{f}:\bbbr^{n+m}\to\bbbr^n$ be a Lipschitz extension of $f$. 
Assume that $L$ is the Lipschitz constant of $\tilde{f}$.
Note that
$|J^n f|=|J^n \tilde{f}|$ at almost all points of $A$, and, by the proof presented above, $|J^n\tilde{f}|(x)=\Theta_*^n(\tilde{f},x)=\Theta^{*n}(\tilde{f},x)$ for almost all
$x\in\bbbr^{n+m}$. Note also that $\Theta^{*n}(\tilde{f},x)\geq \Theta^{*n}({f},x)$, because
in the case of $\Theta^{*n}(\tilde{f},x)$ we consider the Hausdorff content of $\tilde{f}(B(x,r))$ while in the case of $\Theta^{*n}({f},x)$ we only consider the Hausdorff content of $f(B(x,r)\cap A)=\tilde{f}(B(x,r)\cap A)$.

Since for almost all $x\in A$ we have
$$
|J^n f|(x)=|J^n\tilde{f}|(x)=\Theta_*^n(\tilde{f},x)=
\Theta^{*n}(\tilde{f},x)\geq \Theta^{*n}(f,x)\geq \Theta_*^n(f,x),
$$
it suffices to show that
\begin{equation}
\label{L10}
\Theta_*^n(f,x)\geq |J^n\tilde{f}|(x)
\quad
\text{for almost all $x\in A$.}
\end{equation}
For almost all $x\in A$ such that $|J^nf|(x)=0$,
this is particularly easy. Indeed, we have
$$
\Theta_*^n(f,x)\geq 0=|J^n f|(x)=|J^n\tilde{f}|(x),
$$
so \eqref{L10} is obvious.

We are left with the case when $|J^nf|(x)>0$.
Since we want to prove \eqref{L10} almost everywhere, we can assume that 
$x$ is a density point of $A$ and $\tilde{f}$ is differentiable at $x$. 
Then $|J^n\tilde{f}|(x)=\Theta^{*n}(\tilde{f},x)=\Theta_*^n(\tilde{f},x)$, 
$\ap Df(x)=D\tilde{f}(x)$, 
and $|J^n\tilde{f}|(x)=|J^nf|(x)>0$.
In particular, we have $\rank D\tilde{f}(x) = n$.

The idea of the rest of the proof is simple. 
Since $x$ is a density point of $A$, for small $r>0$, the content
$\H^n_\infty(f(B(x,r)\cap A))=\H^n(\tilde{f}(B(x,r)\cap A))$
{\bf is not much smaller} than  $\H^n(\tilde{f}(B(x,r)))=\H^n_\infty(\tilde{f}(B(x,r)))$.
Therefore dividing by $\omega_nr^n$ and passing to the liminf as $r\to 0$ gives
$$
\liminf_{r\to 0}\frac{\H^n_\infty(f(B(x,r)\cap A))}{\omega_n r^n}
+\eps
\geq
\liminf_{r\to 0}\frac{\H^n_\infty(\tilde{f}(B(x,r)))}{\omega_n r^n}=\Theta_*^n(\tilde{f},x)=
|J^n\tilde{f}|(x)
$$
for all $\eps > 0$.
Thus the main focus in the argument presented below is proving the phrase
``{\bf is not much smaller}''.
While the idea of the proof presented below is very geometric and relatively simple, the details are not.

By translating the coordinate system we may assume that $x=0$.
The ellipsoid $W_{0,r}=\tilde{f}(0)+D\tilde{f}(0)(B(0,r))$ is the image of the ball 
$B^{n+m}(0,r)\subset T_0\bbbr^{n+m}$.
By abusing notation we will identify the tangent space $T_0\bbbr^{n+m}$ with $\bbbr^{n+m}$. 
For example the same notation will be used for the ball $B^{n+m}(0,r)$ in the tangent space $T_0\bbbr^{n+m}$,
and for the ball $B^{n+m}(0,r)=0+B^{n+m}(0,r)$ in $\bbbr^{n+m}$.

Since $\rank D\tilde{f}(0)=n$, we have $\dim\ker D\tilde{f}(0)=m$.
Rotating the coordinate system in $\bbbr^{n+m}$ we may assume that
$$
\bbbr^{n+m}=T_0\bbbr^{n+m}=(\ker D\tilde{f}(0))^\perp\oplus (\ker D\tilde{f}(0))=\bbbr^n\oplus \bbbr^m.
$$
Let 
$$
\pi:\bbbr^n\oplus\bbbr^m\to\bbbr^n\oplus\{ 0\}\subset\bbbr^n\oplus\bbbr^m
$$
be the orthogonal projection.
Note that the $n$-dimensional ball in the tangent space
$$
B^n_0(r):=\pi(B^{n+m}(0,r))
=(\bbbr^n\times\{ 0\})\cap B^{n+m}(0,r)\subset T_0\bbbr^{n+m}
$$
has radius $r$ and
$$
W_{0,r}=\tilde{f}(0)+D\tilde{f}(0)(B^n_0(r)).
$$
Let $\eps>0$ be given, then there is a positive integer $M$ such that
\begin{equation}
\label{1492}
|J^n\tilde{f}|(0)\left(1-\frac{\sqrt{2}}{\lambda_1 M}\right)^n\left(1-\frac{1}{2M}\right)- \frac{L^n}{2M}
\geq
|J^n\tilde{f}|(0) - \eps,
\end{equation}
where $0<\lambda_1\leq\lambda_2\leq\ldots\leq\lambda_n$ are the singular values of $D\tilde{f}(0)$.

For any $r>0$ and any $0<t<1$ let
$$
V_{t,r}=(\bbbr^n\times B^m(0,tr))\cap B^{n+m}(0,r)
$$
be the $tr$-cylinder around $B^n_0(r)$ inside of the ball $B^{n+m}(0,r)$.
Clearly $\H^{n+m}(V_{t,r})<\omega_n r^n\cdot\omega_m(tr)^m$ 
because $V_{t,r}\subset B^n(0,r)\times B^m(0,tr)$.
Also, when $t$ is small,
the volume of $V_{t,r}$ must be close to the volume of this product of balls in the following sense:
$$
\lim_{t\to 0}\frac{\H^{n+m}(V_{t,r})}{\omega_n r^n\cdot\omega_m (tr)^m}=1.
$$
Thus there is $0<t_M<(1-\frac{1}{2M})^{1/n}$
such that
\begin{equation}
\label{L1}
\left(1-\frac{1}{4M}\right)\omega_n\omega_mr^{n+m}t_M^m<
\H^{n+m}(V_{t_M,r})<
\omega_n\omega_m r^{n+m} t_M^m.
\end{equation}
Note that $t_M$ depends on $M$ but not on $r$ because 
$V_{t,r} = rV_{t,1}$ (where $rE := \{rx \, : \, x \in E\}$ for $E \subset \bbbr^{n+m}$).

Since $0\in A$ is a density point of $A$, we may choose $\delta>0$ depending on $M$ so that for $0<r<\delta$ we have
\begin{equation}
\label{L4}
\H^{n+m}(V_{t_M,r}\setminus A)\leq
\H^{n+m}(B^{n+m}(0,r)\setminus A)<\frac{1}{4M}\omega_n\omega_m r^{n+m} t_M^m
\end{equation}
and hence
\begin{equation}
\label{L2}
\H^{n+m}(V_{t_M,r}\cap A)=
\H^{n+m}(V_{t_M,r})-\H^{n+m}(V_{t_M,r}\setminus A)>
\left(1-\frac{1}{2M}\right)\omega_n\omega_m r^{n+m} t_M^m.
\end{equation}
Since $\tilde{f}$ is differentiable at $0$, we may also assume 
(by taking, if necessary, a smaller $\delta>0$ depending on $M$) that
\begin{equation}
\label{L5}
|\tilde{f}(y)-\tilde{f}(0)-D\tilde{f}(0)y|<
\frac{r}{M}
\quad
\text{for all $0<r<\delta$ and $y\in B^{n+m}(0,r)$}.
\end{equation}
Let $0<r<\delta$.
For $b\in\bbbr^m$ we define
$$
B^n_b(r)=(\bbbr^n\times\{ b\})\cap B^{n+m}(0,r).
$$
If we regard $V_{t_M,r}$ as a cylinder with base $B^m(0,t_M r)$ (and with spherical caps),
then the fibers (orthogonal to the base) are the $n$-balls $B^n_b(r)$ where $b$ ranges over $B^m(0,t_M r)$.

We claim that the set of $b\in B^m(0,t_M r)$ which satisfy
\begin{equation}
\label{L3}
\H^n(B^n_b(r)\cap A)>\left(1-\frac{1}{2M}\right)\, \omega_n r^n
\end{equation}
has positive $\H^m$-measure.
Indeed, suppose to the contrary that
$$
\H^n(B^n_b(r)\cap A)\leq\left(1-\frac{1}{2M}\right)\, \omega_n r^n
\quad
\text{for } \H^m \text{-almost all $b\in B^m(0,t_M r)$}.
$$
Then it follows from Fubini's theorem that 
$$
\H^{n+m}(V_{t_M,r}\cap A)\leq\left(1-\frac{1}{2M}\right)\, \omega_nr^n\cdot \omega_m (t_M r)^m
$$
which contradicts \eqref{L2}.
In other words, we have shown that the set of fibers of $V_{t_M,r}$
which see a ``large'' part of $A$ 
has positive $\H^m$-measure.

Let $b\in B^m(0, t_M r)$ be such that \eqref{L3} is satisfied. Then the radius $R$ of the ball $B^n_b(r)$ satisfies
$$
r\geq R>\left(1-\frac{1}{2M}\right)^{1/n}r,
$$
and since $D \tilde{f}(0)$ vanishes in 
the direction of $(0,b)$
$$
\tilde{f}(0)+D\tilde{f}(0)(B^n_b(r))=
\tilde{f}(0)+D\tilde{f}(0)(\pi(B^n_b(r)))=
\tilde{f}(0)+D\tilde{f}(0)(B^n_0(R))=W_{0,R},
$$
$$
\H^n(W_{0,R})=|J^n\tilde{f}|(0)\omega_n R^n.
$$
Recall that
$$
0<t_M<\left(1-\frac{1}{2M}\right)^{1/n}
\quad
\text{and}
\quad
b\in B^m(0,t_Mr).
$$
Therefore, 
$|b| < t_M r < R$.
Thus by \eqref{L5} and the Pythagorean theorem, we have
\begin{equation}
\label{L100}
|\tilde{f}(y)-(\tilde{f}(0)+D\tilde{f}(0)y)|
\leq
M^{-1}\sqrt{(t_M r)^2+R^2}
< \sqrt{2} 
 M^{-1}R
\quad
\text{for $y\in \partial B^n_b(r)$.}
\end{equation}
Since the distance between the boundaries of the ellipsoids
(we assume that $M$ is so large that 
$\sqrt{2} /M<\lambda_1$)
$$
W_{0,(1-\sqrt{2} M^{-1}/\lambda_1)R}\subset W_{0,R}
$$
equals $\sqrt{2}M^{-1} R$, it follows from \eqref{L100} (as in \eqref{rudy102}) that
$$
W_{0,(1-\sqrt{2}M^{-1}/\lambda_1)R}\subset \tilde{f}(B^n_b(r)).
$$
Therefore
\begin{equation}
\label{L6}
\begin{split}
\H^n(\tilde{f}(B^n_b(r)))
&\geq 
\H^n(W_{0,(1-\sqrt{2} M^{-1}/\lambda_1)R})=
|J^n\tilde{f}|(0)\omega_n\left(1-\frac{\sqrt{2}}{\lambda_1 M}\right)^n R^n\\
&>
|J^n\tilde{f}|(0)\omega_n\left(1-\frac{\sqrt{2}}{\lambda_1 M}\right)^n\left(1-\frac{1}{2M}\right)r^n.
\end{split}
\end{equation}
Inequality \eqref{L3} also implies that
$$
\H^n(B^n_b(r)\setminus A)=
\H^n(B^n_b(r))-\H^n(B^n_b(r)\cap A)<\frac{1}{2M}\,\omega_n r^n.
$$
Therefore
\begin{equation}
\label{L7}
\H^n(\tilde{f}(B^n_b(r)\setminus A))\leq 
\frac{L^n}{2M}\, \omega_n r^n.
\end{equation}
We have
\begin{equation}
\label{L8}
\tilde{f}(B^n_b(r))=\tilde{f}(B^n_b(r)\cap A)\cup\tilde{f}(B^n_b(r)\setminus A)
\end{equation}
so \eqref{L8}, \eqref{L6}, and \eqref{L7} yield
\begin{equation*}
\begin{split}
\H^n(\tilde{f}(B^n_b(r)\cap A))
&\geq
\H^n(\tilde{f}(B^n_b(r)))-\H^n(\tilde{f}(B^n_b(r)\setminus A)) \\
&\geq
\omega_n r^n\left(|J^n\tilde{f}|(0)\left(1-\frac{\sqrt{2}}{\lambda_1M}\right)^n\left(1-\frac{1}{2M}\right)- \frac{L^n}{2M}\right)
\end{split}
\end{equation*}
and hence \eqref{1492} yields
\begin{equation*}
\begin{split}
\frac{\H^n_\infty(f(B^{n+m}(0,r)\cap A))}{\omega_n r^n}
&=
\frac{\H^n(\tilde{f}(B^{n+m}(0,r)\cap A))}{\omega_n r^n}\geq
\frac{\H^n(\tilde{f}(B^n_b(r)\cap A))}{\omega_n r^n}\\
&\geq
|J^n\tilde{f}|(0)\left(1-\frac{\sqrt{2}}{\lambda_1 M}\right)^n\left(1-\frac{1}{2M}\right)- \frac{L^n}{2M}\\
&\geq
|J^n\tilde{f}|(0)-\eps
\end{split}
\end{equation*}
for any $0<r<\delta$.
Therefore
$$
\Theta^n_*(f,0)=
\liminf_{r\to 0} \frac{\H^n_\infty(f(B^{n+m}(0,r)\cap A))}{\omega_n r^n}\\
\geq |J^n\tilde{f}|(0)
$$
which
completes the proof of \eqref{L10} and hence that of
Proposition~\ref{euclem}.
\end{proof}

The following example provides evidence that,
if the assumption \eqref{HCap} is replaced by 
the assumptions of Theorem~\ref{HardSard2},
then the bound \eqref{KBound} 
and global bi-Lipschitz homeomorphism $G$
cannot be recovered.
In other words, even if the $n$-density of $f$ 
satisfies $\Theta^{*n}(f,x)> 0$ on a set of positive measure,
there is no universal constant $\eta > 0$
depending only on $m$, $n$, and $\delta$
so that $\H^{n+m}(K) > \eta$.
\begin{proposition}
\label{fold}
Fix a constant $\Lambda>1$.
For any $\eps > 0$, there is a mapping 
$f:\mathbb{R}^{1+1} \supset [0,1]^2 \to \mathbb{R}$ 
with $\Theta^{*1}(f,x)=\Theta_*^1(f,x)=|J^1f|(x) = 1$ a.e.
satisfying the following:
for any measurable set $K \subset [0,1]^2$
and any $\Lambda$-bi-Lipschitz homeomorphism $G:\bbbr^2 \to \bbbr^2$
such that
$(f \circ G^{-1})|_{ (\bbbr \times \{y\}) \cap G(K) }$ is $\Lambda$-bi-Lipschitz for any $y \in \bbbr$,
we have $\H^2(K) < \eps$.
\end{proposition}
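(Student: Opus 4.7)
The plan is to take $f$ to be a one-dimensional sawtooth depending only on $x_1$. For a large integer $N$, to be chosen at the end in terms of $\eps$ and $\Lambda$, I would set $\phi_N(t):=\dist(t,\tfrac{1}{N}\bbbz)$, so that $\phi_N:\bbbr\to[0,\tfrac{1}{2N}]$ is $1$-Lipschitz with $|\phi_N'|=1$ almost everywhere, and define $f(x_1,x_2):=\phi_N(x_1)$ on $[0,1]^2$. Then $Df(x)=(\phi_N'(x_1),0)$, so $|J^1f|(x)=|\phi_N'(x_1)|=1$ almost everywhere; Proposition~\ref{euclem} then immediately delivers $\Theta^{*1}(f,x)=\Theta_*^1(f,x)=1$ a.e.\ regardless of $N$. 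The density and Jacobian requirement in the proposition is therefore satisfied for every $N$, and only the measure bound on $K$ remains.

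The heart of the argument is a slice-diameter estimate. Given $G$ and $K$ satisfying the hypothesis, fix $y\in\bbbr$ and any $(s_1,y),(s_2,y)\in G(K)$. Writing $\alpha_i:=(G^{-1}(s_i,y))_1$, the slice condition reads
$$
\Lambda^{-1}|s_1-s_2|\leq |f(G^{-1}(s_1,y))-f(G^{-1}(s_2,y))|=|\phi_N(\alpha_1)-\phi_N(\alpha_2)|,
$$
and the right-hand side is bounded above by $\tfrac{1}{2N}$ simply because $\phi_N$ takes values in $[0,\tfrac{1}{2N}]$. Hence every horizontal slice $(\bbbr\times\{y\})\cap G(K)$ sits inside an interval of length at most $\Lambda/(2N)$.

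To upgrade this to an area estimate I would apply Fubini. Since $K\subset[0,1]^2$ and $G$ is $\Lambda$-bi-Lipschitz, $G(K)$ has diameter at most $\Lambda\sqrt{2}$, so the projection of $G(K)$ onto the $y$-axis sits in an interval of length at most $\Lambda\sqrt{2}$. Combining this with the slice bound yields
$$
\H^2(G(K))\leq \frac{\Lambda}{2N}\cdot\Lambda\sqrt{2}=\frac{\sqrt{2}\,\Lambda^2}{2N},
$$
and the Lipschitz bound on $G^{-1}$ then gives $\H^2(K)\leq\Lambda^2\H^2(G(K))\leq\sqrt{2}\,\Lambda^4/(2N)$. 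Choosing any $N>\sqrt{2}\,\Lambda^4/(2\eps)$ closes the argument.

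There is no genuine technical obstacle in this construction; the conceptual point worth emphasizing is that the sawtooth is defeated not by any delicate geometry of the slicing curves $G^{-1}(\bbbr\times\{y\})$ but purely by the small $L^\infty$-oscillation $1/(2N)$ of $\phi_N$. This is precisely what lets the example separate the two theorems: the $(1,1)$-Hausdorff content $\H_\infty^{1,1}(f,[0,1]^2)$ appearing in Theorem~\ref{AS} is forced to zero as $N\to\infty$ (the image $f([0,1]^2)=[0,\tfrac{1}{2N}]$ shrinks), while $\Theta^{*1}(f,\cdot)\equiv 1$ is untouched, so the hypothesis of Theorem~\ref{HardSard2} remains intact even though Theorem~\ref{AS} cannot apply.
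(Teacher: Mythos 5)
Your proof is correct and follows essentially the same strategy as the paper: build a Lipschitz map with $|J^1 f|=1$ a.e.\ but tiny image $[0,O(1/N)]$, use the $\Lambda$-bi-Lipschitz slice condition to show each horizontal slice of $G(K)$ has length $O(\Lambda/N)$, and close with Fubini and the bi-Lipschitz bound $\H^2(K)\le\Lambda^2\H^2(G(K))$. The paper defines $f$ as a projection of $N$ iterated $2$D foldings, but that composition is independent of $x_2$ and reduces to exactly your sawtooth $\dist(x_1, 2^{-(N-1)}\bbbz)$, so the difference is purely presentational.
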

\begin{proof}
Fix $\eps > 0$ and choose $N \in \bbbn$ large enough so that $\Lambda^4 2^{1-N} \sqrt{2} < \eps$.
For any $n \in \bbbn$, 
define $f_n:[0,2^{-(n-1)}]^2 \to [0,2^{-n}]^2$ as follows:
$$
f_n(x,y) = \left\{ 
\begin{array}{ll}
(x,y) &  \text{if } (x,y) \in [0,2^{-n}] \times [0,2^{-n}] \\
(2^{-(n-1)}-x,y) &  \text{if } (x,y) \in [2^{-n},2^{-(n-1)}] \times [0,2^{-n}] \\
(x,2^{-(n-1)}-y) &  \text{if } (x,y) \in [0,2^{-n}] \times [2^{-n},2^{-(n-1)}] \\
(2^{-(n-1)}-x,2^{-(n-1)}-y) &  \text{if } (x,y) \in [2^{-n},2^{-(n-1)}] \times [2^{-n},2^{-(n-1)}] \\
\end{array}
\right. .
$$
That is, we divide $[0,2^{-(n-1)}]^2$ into four squares of equal size.
On the lower left square, $f_n$ is the identity mapping. On the upper left and lower right squares, $f_n$ is a reflection over an edge onto the lower left square. On the upper right square, $f_n$ is a reflection over both the bottom and left edges onto the lower left square.

Define $f:[0,1]^2 \to [0,2^{-N}]$ to be a composition of $N$ of these reflections 
together with the projection $\pi:\mathbb{R}^2 \to \mathbb{R}$ onto the first coordinate: $\pi(x,y) = x$.
That is, we set
$$
f := \pi \circ f_N \circ f_{N-1} \circ \cdots \circ f_2 \circ f_1
$$
Clearly, $f$ is Lipschitz.

Divide $[0,1]^2$ into $(2^N)^2$ squares $\{Q_i\}$ of side length $2^{-N}$.
Note that in each of the squares $f$ is  a composition of an isometry of $\bbbr^2$ and the orthogonal projection to $\bbbr$ so $|J^1f|=1$ and hence
$\Theta^{*1}(f,x)=\Theta_*^1(f,x)=|J^1f|(x) = 1$ a.e.

Let $G$ be any
$\Lambda$-bi-Lipschitz homeomorphism of $\bbbr^{2}$
and $K \subset [0,1]^2$ be a measurable set
such that 
$(f \circ G^{-1})|_{ (\bbbr \times \{y\}) \cap G(K) }$ is $\Lambda$-bi-Lipschitz for any $y \in \bbbr$.
Write $F = f \circ G^{-1}$.
For each $y \in \bbbr$, we have
$$
\H^1((\bbbr \times \{y\}) \cap G(K))
\leq \Lambda \H^1(F((\bbbr \times \{y\}) \cap G(K)))
\leq \Lambda \H^1([0,2^{-N}])
=\Lambda 2^{-N}.
$$
Indeed, the first inequality is a consequence of the fact that 
$F|_{ (\bbbr \times \{y\}) \cap G(K) }$ is $\Lambda$-bi-Lipschitz and the second inequality follows simply from the fact that the image of $F$ is contained in $[0,2^{-N}]$.
Note also that $\diam(G(K)) \leq \Lambda\diam(K) \leq \Lambda\sqrt{2}$.
In particular, $G(K)$ is contained in some square $Q=I_1 \times I_2$ 
where $I_1$ and $I_2$ are intervals of length $2\Lambda \sqrt{2}$.
Thus 
\begin{align*}
\H^2(K) 
\leq \Lambda^2 \H^2(G(K))=
\Lambda^2 \int_{Q} \chi_{G(K)}
&= \Lambda^2 \int_{I_2} \H^1((\bbbr \times \{y\}) \cap G(K)) \, dy \\
&\leq \Lambda^2 \int_{I_2} \Lambda 2^{-N} \, dy
=\Lambda^4 2^{1-N} \sqrt{2} < \eps .
\end{align*}
\end{proof}

\end{document}